\documentclass[10pt]{article}

\usepackage{amsmath,amssymb,amsthm,bm,comment,diagbox,enumerate,ifxetex,mathtools,microtype,scalerel,tikz,todonotes,booktabs,color}
\usepackage{lscape}
\usepackage[margin=1in]{geometry}

\usepackage{subcaption}

\ifxetex
  \usepackage{fontspec}
\else
  \usepackage[T1]{fontenc}
  \usepackage[utf8]{inputenc}
  \usepackage{lmodern}
\fi


\DeclareMathOperator{\Arith}{Arith}

\DeclareMathOperator{\SArith}{SArith}

\DeclarePairedDelimiter{\floor}{\lfloor}{\rfloor}

\newcommand{\overbar}[1]{\mkern 0.6mu\overline{\mkern-0.6mu#1\mkern-0.6mu}\mkern 0.6mu}

\newcommand{\vd}{\mathbf{d}}
\newcommand{\vr}{\mathbf{r}}
\newcommand{\vzero}{\mathbf{0}}

\renewcommand{\r}{\overbar{\mathbf{r}}}

\newcommand{\rr}{r}

\newcommand{\ZZ}{\mathbb{Z}}

\newcommand{\CP}{\mathcal{P}}
\newcommand{\p}{\mathfrak{p}_3}
\newcommand{\pp}{\mathfrak{p}_2}

\newtheorem{proposition}{Proposition}[section]
\newtheorem{conjecture}[proposition]{Conjecture}
\newtheorem{lemma}[proposition]{Lemma}
\newtheorem{corollary}[proposition]{Corollary}
\newtheorem{theorem}[proposition]{Theorem}
\newtheorem{definition}[proposition]{Definition}
\newtheorem*{genericthm*}{\thistheoremname}
\newenvironment{namedthm*}[1]
  {\renewcommand{\thistheoremname}{#1}%
   \begin{genericthm*}}
  {\end{genericthm*}}
\theoremstyle{definition}

\newcommand{\thistheoremname}{}

\title{Arithmetical Structures on Paths With a Doubled Edge}
\author{Darren Glass \& Joshua Wagner}
\date{}

\begin{document}
\maketitle

\begin{abstract}
An arithmetical structure on a graph is given by a labeling of the vertices which satisfies certain divisibility properties.  In this note, we look at several families of graphs and attempt to give counts on the number of arithmetical structures for graphs in these families.
\end{abstract}

\section{Introduction}

In this paper, we will consider the arithmetical structures on a particular family of graphs.  While arithmetical structures can be defined in several equivalent ways, we will take an approach that is based on elementary number theory.

\begin{definition}
An arithmetical structure on a graph $G$ is given by an assignment to each vertex of a nonnegative integer so that the set of all labels is relatively prime and so that the label of each vertex is a divisor of the sum of the label of its neighbors, counted with multiplicity where appropriate.
\end{definition}

This definition is equivalent to the definition given by Lorenzini in \cite{Lor}, in which he sets $A$ to be the adjacency matrix of a graph $G$ and defines an arithmetical structure to be a pair of vectors $(\vd, \vr) \in (\ZZ_{\ge 0})^n\times (\ZZ_{>0})^n$ so that the matrix $L(G,\vd)\coloneqq (\text{diag}(\vd)-A)$, satisfies the equation $L(G, \vd)\vr = \vzero$, with the additional restriction that the entries of $\vr$ are chosen to have no nontrivial common factor.

We denote the set of all arithmetical structures on a graph $G$ by $\Arith(G)$, and one question of interest is the size of this set.  Lorenzini proved in \cite{Lor} that the number of arithmetical structures on any given graph is finite, but his proof does not give a way to count or even bound the number of such structures.  Work of various authors in \cite{ICERM}, \cite{Oax}, \cite{CV2}, \cite{CV}, and more are able to count the number of structures for families of graphs such as paths, cycles, bidents, star graphs, and complete graphs.  In this paper, we consider the family of graphs consisting of a path where we double a single edge, as illustrated in Figure \ref{F:examples}.

\begin{definition}
The graph $\CP_{m,n}$ consists of vertices $\{a_1,\ldots,a_m,b_1,\ldots,b_n\}$ where there is a single edge between $a_i$ and $a_{i+1}$ for each $1 \le i < m$, a single edge between $b_i$ and $b_{i+1}$ for each $1 \le i < n$, and two edges between $a_1$ and $b_1$.
\end{definition}

\begin{figure}[h]
\begin{center}
\begin{tikzpicture}
  [scale=.4,auto=left,every node/.style={circle,fill=blue!20}]
  \node (a) at (-6,0) {$a_3$};
  \node (b) at (-3,0) {$a_2$};
  \node (c) at (0,0) {$a_1$};
  \node (d) at (3,0) {$b_1$};
  \node (e) at (6,0) {$b_2$};
\foreach \from/\to in {a/b, b/c, d/e}
    \draw (\from) -- (\to);

\draw (c) to[bend left=30] (d);
\draw (c) to[bend right=30] (d);

\end{tikzpicture} \hskip .5in
\begin{tikzpicture}
  [scale=.4,auto=left,every node/.style={circle,fill=blue!20}]
  \node (c) at (0,0) {$a_1$};
  \node (d) at (3,0) {$b_1$};
  \node (e) at (6,0) {$b_2$};
  \node (f) at (9,0) {$b_3$};
  \node (g) at (12,0) {$b_4$};
  \node (h) at (15,0) {$b_5$};
\foreach \from/\to in {d/e, e/f, f/g, g/h}
    \draw (\from) -- (\to);

\draw (c) to[bend left=30] (d);
\draw (c) to[bend right=30] (d);

\end{tikzpicture}
\end{center}
\caption{The graphs $\CP_{3,2}$ and $\CP_{1,5}$}
\label{F:examples}
\end{figure}

We note that for ease of notation we will refer interchangeably to the vertex $a_i$ and to the numerical label assigned to it in a given arithmetical structure.  We should also note that when considering the doubled edge one must count the neighbors with multiplicity; in particular, the condition for an arithmetical structure includes that $a_1|(a_2+2b_1)$ and $b_1|(b_2+2a_1)$.  Examples of arithmetical structures on these graphs are given in Figure \ref{F:examples2}.

\begin{figure}[!h]
\begin{center}
\begin{tikzpicture}
  [scale=.4,auto=left,every node/.style={circle,fill=blue!20}]
  \node (a) at (-6,0) {$2$};
  \node (b) at (-3,0) {$4$};
  \node (c) at (0,0) {$6$};
  \node (d) at (3,0) {$13$};
  \node (e) at (6,0) {$1$};
\foreach \from/\to in {a/b, b/c, d/e}
    \draw (\from) -- (\to);

\draw (c) to[bend left=30] (d);
\draw (c) to[bend right=30] (d);

\end{tikzpicture} \hskip .5in
\begin{tikzpicture}
  [scale=.4,auto=left,every node/.style={circle,fill=blue!20}]
  \node (c) at (0,0) {$1$};
  \node (d) at (3,0) {$5$};
  \node (e) at (6,0) {$8$};
  \node (f) at (9,0) {$3$};
  \node (g) at (12,0) {$1$};
  \node (h) at (15,0) {$1$};
\foreach \from/\to in {d/e, e/f, f/g, g/h}
    \draw (\from) -- (\to);

\draw (c) to[bend left=30] (d);
\draw (c) to[bend right=30] (d);

\end{tikzpicture}
\end{center}
\caption{An example of a smooth structure on $\CP_{3,2}$ and a non-smooth structure on $\CP_{1,5}$.}
\label{F:examples2}
\end{figure}
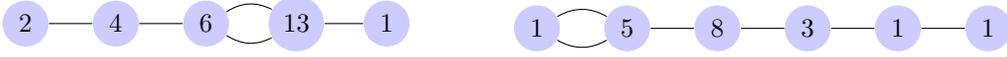

In order to help us understand the arithmetical structures on this family, we first define a subset of those structures which we will call {\it smooth.}  We will denote the set of smooth arithmetical structures by $\SArith(\CP_{m,n})$.

\begin{definition}
An arithmetical structure on $\CP_{m,n}$ is said to be smooth if we have that $a_1 > a_2 > \ldots > a_m$ and $b_1 > b_2 > \ldots > b_n$.
\end{definition}

We note that in the language of $\vd$-vectors from Lorenzini's definition, this corresponds to the fact that all vertices other than possibly $a_1$ and $b_1$ have associated $\vd$-values greater than one. Moreover, in a smooth arithmetical structure we have that $0<a_i<a_{i-1}$ and $a_{i-1}|a_i+a_{i-2}$, from which it follows that $a_i$ must be the least residue of $-a_{i-2}$ mod $a_{i-1}$.  In this way, we see that knowing $a_1$ and $a_2$ will determine the values of all $a_i$ in a smooth arithmetical structure and the analogous results hold for the $b_i$.

One goal of this note is to count the number of smooth arithmetical structures on $\CP_{1,n}, \CP_{2,n}, \CP_{3,n}$.  In particular, we will see in Theorems \ref{T:smoothP1n}, \ref{T:smoothP2n}, and \ref{T:smoothP3n} that in each of these cases the number of smooth structures on $\CP_{m,n}$ grows on the order of a polynomial of degree $m-1$ in $n$.  This suggests the following conjecture to the authors, which we hope to explore going forward:

\begin{conjecture}
The number of smooth arithmetical structures on the graph $\CP_{m,n}$ grows at the same rate as $\binom{m+n-1}{n}$.
\end{conjecture}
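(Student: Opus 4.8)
The plan is to lift the arguments behind Theorems~\ref{T:smoothP1n}, \ref{T:smoothP2n}, and \ref{T:smoothP3n} from a case analysis in small $m$ to a uniform combinatorial model. As observed above, a smooth structure on $\CP_{m,n}$ is completely determined by the four seed values $a_1, a_2, b_1, b_2$: the recursion $a_i \equiv -a_{i-2} \pmod{a_{i-1}}$ (least positive residue) propagates the $a$-arm, the analogous recursion propagates the $b$-arm, and the doubled edge contributes the coupling conditions $a_1 \mid a_2 + 2b_1$ and $b_1 \mid b_2 + 2a_1$ together with $\gcd(a_1,\ldots,a_m,b_1,\ldots,b_n) = 1$. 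The first step is to recognize that the propagation on an arm is exactly a Hirzebruch--Jung (``ceiling'') continued fraction algorithm: the decreasing sequence $a_1 > a_2 > \cdots > a_m$ is encoded by the continued fraction $[c_2, c_3, \ldots, c_m]$ of the ratio $a_1/a_2$, with every partial quotient $c_i \ge 2$ equal to the $\vd$-value at $a_i$, and the arm has exactly $m$ vertices precisely when this expansion has length $m-1$, i.e.\ when $a_m \mid a_{m-1}$ while $a_i \nmid a_{i-1}$ for $2 \le i < m$. Counting $\SArith(\CP_{m,n})$ thus becomes the problem of counting pairs of such continued fractions, of lengths $m-1$ and $n-1$, whose seed values satisfy the two coupling divisibilities and the global coprimality.

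The second step is to pass from the $\vd$-data to lattice paths, in analogy with the lattice-path encodings behind the enumeration of arithmetical structures on plain paths. Since $\binom{m+n-1}{n}$ counts monotone lattice paths in an $(m-1)\times n$ rectangle, the aim is a bijection, or an asymptotically tight pair of injections, between $\SArith(\CP_{m,n})$ and such paths. I would first analyze the \emph{decoupled} model in which the two divisibilities at the doubled edge are dropped, so that the arms interact only through a shared root block; writing each partial quotient as $c_i = 1 + e_i$ with $e_i \ge 1$ and recording an appropriate partial-sum statistic should realize admissible arms of a given length as lattice paths of controlled height, and the number of pairs should then be $\binom{m+n-1}{n}$ up to a bounded factor. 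One then re-imposes the coupling: for a fixed $a_1$, the condition $a_1 \mid a_2 + 2b_1$ confines $b_1$ to a single residue class modulo $a_1$, so it cuts the number of completions by a factor comparable to $a_1$, which must be weighed against the number of ways to grow the arms out of $a_1$ --- exactly the bookkeeping done by hand when $m \le 3$.

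The main obstacle is controlling this interaction between the two arms, because one cannot simply multiply the two one-arm counts: the coupling correlates the root labels $a_1$ and $b_1$, and arms with different root labels support very different numbers of length-$m$ completions. The natural device is a generating function: form $\sum_{m,n \ge 1} \#\SArith(\CP_{m,n})\, x^m y^n$, express it through the transfer operator governing the ceiling continued fraction recursion on each arm together with a coupling kernel built from the two divisibility relations, and compare its growth along diagonals with that of $\sum_{m,n}\binom{m+n-1}{n}\,x^m y^n$. Showing that the coupling kernel does not disturb the dominant singular behavior is the crux; it is also what would upgrade the statement ``polynomial of degree $m-1$ in $n$'', which follows arm-by-arm from the $m \le 3$ pattern, to the precise growth rate asserted in the conjecture.

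Short of an exact count, a more modest target is a matching pair of bounds $c\binom{m+n-1}{n} \le \#\SArith(\CP_{m,n}) \le C\binom{m+n-1}{n}$. For the lower bound one exhibits an explicit family of seeds --- for instance fixing $a_2,\ldots,a_m$ and $b_2,\ldots,b_n$ to realize small prescribed $\vd$-patterns and letting $a_1 = b_1$ absorb the remaining freedom --- and for the upper bound one bounds, for each fixed pair of arm lengths, the seed values that can occur, using that $a_1 \mid a_2 + 2b_1$, $b_1 \mid b_2 + 2a_1$, $a_1 > a_2$, and $b_1 > b_2$ leave only finitely many, and arguing that their number is $O\bigl(\binom{m+n-1}{n}\bigr)$. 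The coprimality condition is harmless throughout: it holds automatically once some label equals $1$, and otherwise excludes only a lower-order set, as in the proofs of Theorems~\ref{T:smoothP1n}--\ref{T:smoothP3n}.
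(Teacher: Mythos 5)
This statement is a \emph{conjecture} in the paper --- the authors explicitly leave it open (``which we hope to explore going forward'') --- and your text does not close it either: what you have written is a research programme, not a proof. The decisive step is missing at exactly the point you yourself flag as ``the crux.'' Your plan is (i) set up a decoupled model of two independent arms encoded by Hirzebruch--Jung continued fractions, (ii) argue that the decoupled count is $\binom{m+n-1}{n}$ up to a bounded factor, and (iii) show that re-imposing the two divisibilities at the doubled edge does not change the growth rate. Step (ii) is asserted without argument --- the number of admissible arms of length $m-1$ with a given root label $a_1$ depends heavily on the arithmetic of $a_1$ (this is the function $F$ of the paper, which the authors note they ``can not write down a concise formula for, let alone invert''), so the passage from $\vd$-patterns to a clean lattice-path count is not a routine bookkeeping step. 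Step (iii) is worse: your own heuristic that the coupling ``cuts the number of completions by a factor comparable to $a_1$'' cannot be averaged over the set being counted, because $a_1$ ranges over values as large as $\Theta(n)$ and beyond (the paper notes $a_1$ can be $8n+2$ and $b_1$ can be $4n^2+4n$ already for $m=2$), and the number of length-$m$ completions of a given root is strongly correlated with the size and residue of that root. The generating-function/transfer-operator route and the ``matching pair of bounds'' route are both named but neither is executed: no injection into lattice paths is constructed, no explicit lower-bound family is verified to produce $\Omega\bigl(\binom{m+n-1}{n}\bigr)$ distinct smooth structures, and the upper bound on the number of admissible seeds is precisely the quantity the conjecture asks for.

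Two smaller points. First, a smooth structure is already determined by the pair $(a_1,b_1)$ alone (the paper's lemma in Section~\ref{S:background}): $a_2$ is forced to be the least positive residue of $-2b_1 \bmod a_1$ and likewise for $b_2$, so your four-seed parametrization is redundant and the real difficulty is inverting $(a_1,b_1)\mapsto\bigl(F(2b_1,a_1)-1,\,F(2a_1,b_1)-1\bigr)$. Second, your claim that the coprimality condition ``excludes only a lower-order set'' is not something the $m\le 3$ proofs establish; there it is handled by normalizing to the $\r$-vector with $a_m=2$, which is an exact reparametrization, not an asymptotic one. None of this says your approach is hopeless --- the continued-fraction/lattice-path dictionary is the natural frame, and the case evidence for $m\le 3$ (constant, linear, quadratic growth in $n$) is consistent with the conjecture --- but as it stands every load-bearing step is a stated intention rather than an argument.
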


We are interested in smooth structures because every arithmetical structure can be reduced to a smooth structure by a pair of operations that we jointly refer to as {\it smoothing} and which we define in Section \ref{S:background}.  If a structure $(\vr,\vd)$ on a graph $G$ is obtained by a sequence of smoothing operations on structure $(\vr',\vd')$ on a graph $H$, we say that $(G,(\vr,\vd))$ is an ancestor of $(H,(\vr',\vd'))$.  Corollary \ref{C:count} will give us a way of counting the number of structures that are derived from the same ancestor, and this will allow us to derive a formula for the total number of arithmetical structures from the total number of smooth structures.  Sections \ref{S:P1n}, \ref{S:P2n}, and \ref{S:P3n} consider these questions for the cases of $m=1,2,3$ respectively. One consequence of our results is the following:

\begin{theorem}
For large $n$, there are approximately $\frac{7}{2}C_n$ arithmetical structures on $\CP_{1,n}$, approximately $\frac{76523}{57600} C_{n}$ structures on $\CP_{2,n}$, and approximately $\frac{78157}{600} C_n$ structures on $\CP_{3,n}$, where $C_n = \frac{1}{n+1}\binom{2n}{n}$ is the $n^{th}$ Catalan number.
\end{theorem}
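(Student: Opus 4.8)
The plan is to deduce this from the smooth counts via the smoothing reduction set up above. Every arithmetical structure on $\CP_{m,n}$ sits above a unique smooth ancestor, and for $m\le 3$ that ancestor is again a structure on some $\CP_{m,k}$ with $k\le n$ (the short $a$-path is not disturbed by smoothing, except in degenerate cases discussed below). Writing $N(s,n)$ for the number of descendants of a smooth structure $s\in\SArith(\CP_{m,k})$ that live on $\CP_{m,n}$, we get
\[
  \abs{\Arith(\CP_{m,n})} \;=\; \sum_{k\le n}\ \sum_{s\in\SArith(\CP_{m,k})} N(s,n).
\]
By Corollary \ref{C:count}, $N(s,n)$ depends on $s$ only through $n-k$ (and, at worst, through a bounded amount of boundary data near the blown-up $b$-endpoint), so the inner sum collapses and, combining with Theorems \ref{T:smoothP1n}, \ref{T:smoothP2n}, \ref{T:smoothP3n}, the right-hand side becomes a finite convolution
\[
  \abs{\Arith(\CP_{m,n})} \;=\; \sum_{k} a_{m,k}\, d_{\,n-k},
\]
where $a_{m,k}$ encodes the smooth counts (a polynomial of degree $m-1$ in $k$ for $k$ large) and $d_j$ is the universal number of ways to blow a $b$-endpoint up by $j$ vertices.

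I then pass to ordinary generating functions, where the convolution factors as $\sum_n\abs{\Arith(\CP_{m,n})}x^n = A_m(x)\,D(x)$. The two factors behave very differently near $x=\tfrac14$. The series $D(x)$ records exactly the blow-up process on the $b$-path, which obeys the same ballot-type recursion that makes $\abs{\Arith(P_{N})}=C_{N-1}$ for ordinary paths (see \cite{ICERM}); hence $D(x)$ has a single dominant square-root singularity at $x=\tfrac14$ and $[x^n]D(x)$ is asymptotic to a fixed rational multiple of $C_n$. By contrast, since $a_{m,k}$ eventually agrees with a polynomial of degree $m-1$, the function $A_m(x)$ is rational with its only pole at $x=1$, hence analytic on a disc of radius strictly greater than $\tfrac14$.

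The theorem now follows from singularity analysis. Because $A_m$ is regular at the dominant singularity $x=\tfrac14$ of the product, the transfer theorem gives
\[
  [x^n]\!\left(A_m(x)\,D(x)\right)\;\sim\;A_m\!\left(\tfrac14\right)[x^n]D(x)\;\sim\;c_m\,C_n,
\]
where $c_m = A_m(\tfrac14)$ times the constant coming from the expansion of $D$ at $\tfrac14$. Evaluating for $m=1,2,3$ then yields $c_1=\tfrac72$, $c_2=\tfrac{76523}{57600}$, and $c_3=\tfrac{78157}{600}$.

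The main obstacle is the very first step: one must check that Corollary \ref{C:count} genuinely makes $N(s,n)$ depend only on the amount of blow-up (plus a controlled, bounded amount of local data), so that the sum is a clean convolution of a rational series with a Catalan-type series rather than something messier, and one must track the degenerate smoothings by hand — structures whose smooth ancestor has a very short $b$-path, or (when $m=1$) collapses onto an ordinary path — since these produce the low-order corrections to $A_m(x)$ that are not given by the generic degree-$(m-1)$ formula. Once the identity $\sum_n\abs{\Arith(\CP_{m,n})}x^n=A_m(x)D(x)$ is pinned down with the right correction terms, the rest — locating the singularity, confirming $A_m$ is regular there, and extracting $A_m(\tfrac14)$ from the formulas of Theorems \ref{T:smoothP2n} and \ref{T:smoothP3n} — is routine, though the arithmetic for $m=3$ is heavy.
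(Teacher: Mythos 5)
There is a genuine structural gap at the heart of your argument: the descendant count $N(s,n)$ is \emph{not} a function of $n-k$ alone, so the identity $\abs{\Arith(\CP_{m,n})}=\sum_k a_{m,k}\,d_{n-k}$ and the product factorization $A_m(x)D(x)$ are false. By Theorem \ref{T:ancestors} the number of structures on $\CP_{m,n}$ lying over a fixed smooth structure on $\CP_{m',n'}$ is $C(m,m')C(n,n')$, and $C(n,n-j)=\frac{n-j}{n}\binom{n+j-1}{n-1}$ grows like $n^{j}/j!$ for fixed $j$: there is no ``universal number of ways to blow up by $j$ vertices.'' (Intuitively, a subdivision can be inserted at any of the $n'$ vertices of the smooth $b$-path, not just at its endpoint, so the blow-up acts multiplicatively per vertex.) The correct generating-function statement is a \emph{composition}, not a product: since $\sum_{n\ge k}C(n,k)x^n=(xC(x))^k$ with $C(x)$ the Catalan generating function, one gets $\sum_n\abs{\Arith(\CP_{m,n})}x^n=\widetilde{A}_m(xC(x))$ where $\widetilde{A}_m(y)=\sum_k a_{m,k}y^k$. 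Singularity analysis then gives an asymptotic constant governed by $\widetilde{A}_m'(1/2)$ (consistent with Lemma \ref{L:C}(h), $C(n,k)/C_n\to k/2^{k+1}$), not by $A_m(1/4)$; with your product ansatz the evaluation point and the role of the derivative both come out wrong, so the constants $c_m$ would not be produced by the computation you describe. A secondary error: you assert the smooth ancestor of a structure on $\CP_{m,n}$ again lives on $\CP_{m,k}$, but ancestors with shorter $a$-paths occur and carry non-negligible weight $C(m,m')$ (for $m=3$ the $\CP_{2,k}$ and $\CP_{1,k}$ ancestors each contribute with weight $2$); this can be absorbed into $a_{m,k}$ but must be stated.

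For comparison, the paper avoids generating functions entirely: it plugs the (quasi-)polynomial smooth counts of Theorems \ref{T:smoothP1n}--\ref{T:smoothP3n} into Corollary \ref{C:count} and evaluates the resulting sums exactly using the closed forms $\sum_k C(n,k)=C_n$, $\sum_k kC(n,k)=C_{n+1}-C_n$, $\sum_k k^2C(n,k)=2C_{n+2}-5C_{n+1}+C_n$ from Lemma \ref{L:C}, handling the residual periodic terms with the limit $C(n,k)/C_n\to k/2^{k+1}$ and $C_{n+1}\approx 4C_n$. Your analytic strategy could be repaired by replacing the product with the composition $\widetilde{A}_m(xC(x))$ and expanding about $x=1/4$ (where $xC(x)\to 1/2$), but as written the key identity on which everything rests does not hold.
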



We conclude this introduction by noting that often when one considers arithmetical structures one is also interested in the Critical Group associated to the structure, which can be defined explicitly as the cokernel of the map from $\ZZ^{m+n} \rightarrow \ZZ^{m+n}$ defined by $x \mapsto L(G,\vd)x$. It follows as an immediate consequence of \cite[Cor 2.3]{Lor} that the order of the critical group associated to an arithmetical structure on $\CP_{m,n}$ is given by $\frac{2}{a_mb_n}$.  In particular, this shows that for any arithmetical structure we either have $a_m=b_n=1$, in which case the associated critical group is $\ZZ/2\ZZ$, or exactly one of these numbers is equal to $2$, in which case the critical group is trivial.  We will prove this condition on $a_m$ and $b_n$ directly in Theorem \ref{T:endpts}, but we note that Lorenzini's result allows us to explicitly determine the critical group for all structures.


\section{Background on Smooth Structures}\label{S:background}

We begin this section by defining two smoothing operations on arithmetical structures.  While we will define these operations on the graphs $\CP_{m,n}$ the definitions are more general and are essentially the same as definitions given in \cite{ICERM} and \cite{Oax}.  In particular, they are equivalent to removing vertices of degree one or two where the $\vd$-vector has value $1$.

\begin{definition}
The following two operations, along with the analogous operations on the $a_i$, are jointly referred to as smoothing a vertex:
\begin{itemize}
\item If we are given an arithmetical structure on a graph $\CP_{m,n}$ so that $b_n=b_{n-1}$ then we obtain a new arithmetical structure on $\CP_{m,n-1}$ by removing the vertex $b_n$ and leaving the other values of $a_i,b_i$ the same.
\item If we are given an arithmetical structure on a graph $\CP_{m,n}$ so that $b_i=b_{i-1}+b_{i+1}$ then we obtain a new arithmetical structure on $\CP_{m,n-1}$ by setting $a_j'=a_j$ for all $j$, $b_j'=b_j$ for $j<i$ and $b_j'=b_{j+1}$ for $j \ge i$.
\end{itemize}
If there are no vertices satisfying either of these cases then we refer to the structure as smooth.  We will say that a structure on $\CP_{m',n'}$ is an ancestor of a structure on $\CP_{m,n}$ if it is obtained from a series of smoothing operations.
\end{definition}

\begin{figure}
\begin{center}
\begin{tikzpicture}
  [scale=.4,auto=left,every node/.style={circle,fill=blue!20}]
  \node (c) at (0,0) {$1$};
  \node (d) at (3,0) {$5$};
  \node (e) at (6,0) {$8$};
  \node (f) at (9,0) {$3$};
  \node (g) at (12,0) {$1$};
  \node (h) at (15,0) {$1$};
\foreach \from/\to in {d/e, e/f, f/g, g/h}
    \draw (\from) -- (\to);

\draw (c) to[bend left=30] (d);
\draw (c) to[bend right=30] (d);

\end{tikzpicture}

\begin{tikzpicture}
  [scale=.4,auto=left,every node/.style={circle,fill=blue!20}]
  \node (c) at (0,0) {$1$};
  \node (d) at (3,0) {$5$};
  \node (e) at (6,0) {$8$};
  \node (f) at (9,0) {$3$};
  \node (g) at (12,0) {$1$};
\foreach \from/\to in {d/e, e/f, f/g}
    \draw (\from) -- (\to);

\draw (c) to[bend left=30] (d);
\draw (c) to[bend right=30] (d);

\end{tikzpicture}

\begin{tikzpicture}
  [scale=.4,auto=left,every node/.style={circle,fill=blue!20}]
  \node (c) at (0,0) {$1$};
  \node (d) at (3,0) {$5$};
  \node (f) at (6,0) {$3$};
  \node (g) at (9,0) {$1$};
\foreach \from/\to in {d/f, f/g}
    \draw (\from) -- (\to);

\draw (c) to[bend left=30] (d);
\draw (c) to[bend right=30] (d);

\end{tikzpicture}
\end{center}
\label{F:Ex3}
\caption{A structure on $\CP_{1,5}$ and the results of two smoothing operations leading to the unique smooth ancestor}
\end{figure}
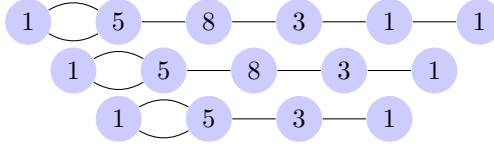

Given an arithmetical structure it is clear that one can obtain a smooth arithmetical structure after some sequence of smoothing operations.  Moreover, one can see that the smooth structure one gets will consist precisely of the maximal decreasing subsequences of the $\{a_i\}$ and $\{b_i\}$, and in particular each structure will have a unique smooth ancestor. In order to count the number of arithmetical structures on $\CP_{m,n}$ that have the same smooth ancestor, we will first define a function $C(n,k)=\frac{k}{n} \binom{2n-k-1}{n-1}$.  We note that this is equal to $B(n-1,n-k)$, where  $B(s,t)=\frac{s-t+1}{s+1}\binom{s+t}{s}$ denotes the ballot numbers, a generalization of the Catalan numbers that were first studied by Carlitz \cite{Carlitz} and are defined for all $s \ge t \ge 0$.  The ballot numbers and Catalan numbers are ubiquitous in combinatorics and have many interpretations.  The proofs of \cite[Theorem 9]{Oax} and \cite[Lemma 2.9]{ICERM} can very easily be adapted to show that $B(s,t)$ counts the number of arithmetical structures on a path of length $s$ that are descendents of a given structure on a path of length $s-t$.  From this, we obtain the following result:

\begin{theorem} \label{T:ancestors}
Any arithmetical structure on $\CP_{m,n}$ has a unique ancestor which is a smooth arithmetical structure on some $\CP_{m',n'}$.  Moreover, for each smooth arithmetical structure on $\CP_{m',n'}$ there are $C(m,m')C(n,n')$ arithmetical structures on $\CP_{m,n}$ having it as an ancestor.
\end{theorem}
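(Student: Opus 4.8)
The plan is to reduce the statement about $\CP_{m,n}$ to the known path result. I would begin by observing that the smoothing operations only ever act on the $a_i$ with $i\ge 2$ or on the $b_i$ with $i\ge 2$: the first operation removes a terminal vertex equal to its neighbor, and the second operation removes an interior vertex equal to the sum of its two neighbors, and in neither case can the doubled-edge condition at $a_1$ or $b_1$ be invoked (those conditions involve the coefficient $2$, so $a_1 = a_2$ or $a_1 = a_2 + 2b_1$-type relations are never the trigger). Consequently the ``$a$-side'' and ``$b$-side'' smoothings are independent of one another and each is literally a sequence of smoothing operations on a path, with the vertex $a_1$ (resp.\ $b_1$) playing the role of an endpoint that is never removed. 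This is exactly the setting in which \cite[Theorem 9]{Oax} and \cite[Lemma 2.9]{ICERM} apply.

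Next I would establish existence and uniqueness of the smooth ancestor. Existence is immediate since each smoothing operation strictly decreases $m+n$, so the process terminates, and it terminates precisely at a structure with no smoothable vertex, i.e.\ a smooth one. For uniqueness, I would appeal to the remark already made in the excerpt: the smooth ancestor consists of the maximal strictly decreasing runs of $\{a_i\}$ starting from $a_1$ and of $\{b_i\}$ starting from $b_1$, which is manifestly determined by the original structure and independent of the order in which smoothing operations are performed. (One can make this rigorous by a short local-confluence argument: any two available smoothing moves either act on disjoint vertices and hence commute, or overlap in a way that leads to the same result after one more move — a standard diamond-lemma check.)

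For the counting statement, fix a smooth arithmetical structure $S'$ on $\CP_{m',n'}$. By the independence observed above, an arithmetical structure on $\CP_{m,n}$ with smooth ancestor $S'$ is determined by a choice of how to ``expand'' the $a$-side from length $m'$ to length $m$ and, independently, how to expand the $b$-side from length $n'$ to length $n$. The adapted proofs of \cite[Theorem 9]{Oax} / \cite[Lemma 2.9]{ICERM} tell us that the number of descendants on a path of length $s$ of a given structure on a path of length $s-t$ is $B(s,t)$; applying this with $s = m$, $s-t=m'$ gives $B(m-1,m-m')$ choices on the $a$-side (recall the path here has $m$ vertices, hence ``length'' $m-1$ in the convention used, which is why the earlier identity $C(n,k)=B(n-1,n-k)$ appears), and similarly $B(n-1,n-n')$ on the $b$-side. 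Multiplying, and rewriting via $C(m,m')C(n,n')$, yields the claimed count.

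The main obstacle I anticipate is not the counting arithmetic but verifying cleanly that the doubled edge genuinely does not interfere — that is, checking that neither smoothing operation can ever be triggered at $a_1$ or $b_1$, and that when we ``expand'' a smooth structure on the $a$-side the vertex $a_1$ behaves exactly as a path endpoint whose value stays fixed throughout, so that the path-counting lemma applies verbatim. Concretely one must confirm that inserting a new run of vertices between $a_1$ and $a_2$ never changes the relations governing $a_1$ and $b_1$ (the doubled-edge relations $a_1 \mid a_2 + 2b_1$ and $b_1 \mid b_2 + 2a_1$), so the two sides truly decouple; this is a routine but essential check that I would carry out carefully before quoting the path results.
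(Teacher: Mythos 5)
Your proposal follows essentially the same route as the paper: the paper also establishes existence and uniqueness of the smooth ancestor by observing that it consists of the maximal decreasing subsequences of the $\{a_i\}$ and $\{b_i\}$ (so it is independent of the order of smoothing), and then obtains the count $C(m,m')C(n,n')$ by noting the two sides decouple and adapting the path results of \cite[Theorem 9]{Oax} and \cite[Lemma 2.9]{ICERM}, using $C(n,k)=B(n-1,n-k)$. Your additional checks (that $a_1$ and $b_1$ are never removed and that the doubled-edge divisibility conditions are unaffected by expansions away from $a_1,b_1$) are correct and simply make explicit what the paper leaves implicit.
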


The following corollary, which is an immediate consequence, allows us to count the number of arithmetical structures on a given graph $\CP_{m,n}$ from the number of smooth structures on each smaller $\CP_{m',n'}$.  We note that when we define these sets we are specifying the two `sides' of the path, so for example on the graph $\CP_{1,1}$ we are counting the structure where $a_1=2, b_1=1$ as different from the structure where $a_1=1,b_1=2$.  When $m=n$ it might be more natural to divide all of the counts by two to account for the inherent symmetry in the graphs.

\begin{corollary}\label{C:count}
The number of arithmetical structures on $\CP_{m,n}$ can be derived from the number of smooth structures by the following formula:
\[|\Arith(\CP_{m,n})|=\sum_{m'=1}^m \sum_{n'=1}^n (C(m,m')C(n,n')) |\SArith(\CP_{m',n'})|\]
\end{corollary}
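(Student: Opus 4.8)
The plan is to derive this directly from Theorem \ref{T:ancestors} by partitioning the set $\Arith(\CP_{m,n})$ according to the unique smooth ancestor guaranteed by that theorem. The first step is to observe that the map sending each arithmetical structure on $\CP_{m,n}$ to its unique smooth ancestor is well-defined: a sequence of smoothing operations on $\CP_{m,n}$ can only decrease the number of $a$-vertices and the number of $b$-vertices, and it terminates precisely at a smooth structure, so every structure lands on some $\SArith(\CP_{m',n'})$ with $1 \le m' \le m$ and $1 \le n' \le n$. Thus we obtain a disjoint union
\[
\Arith(\CP_{m,n}) = \bigsqcup_{m'=1}^{m}\bigsqcup_{n'=1}^{n} \; \bigsqcup_{S \in \SArith(\CP_{m',n'})} \{\, T \in \Arith(\CP_{m,n}) : T \text{ has } S \text{ as ancestor} \,\}.
\]

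The second step is to count the innermost sets. By the second sentence of Theorem \ref{T:ancestors}, for a fixed smooth structure $S$ on $\CP_{m',n'}$ the number of structures on $\CP_{m,n}$ having $S$ as an ancestor is exactly $C(m,m')C(n,n')$; this count depends only on $m,m',n,n'$ and not on the particular choice of $S$. Taking cardinalities in the displayed disjoint union and using that every term in the innermost sum over $S$ contributes the same value $C(m,m')C(n,n')$, we get
\[
|\Arith(\CP_{m,n})| = \sum_{m'=1}^{m}\sum_{n'=1}^{n} C(m,m')C(n,n')\,|\SArith(\CP_{m',n'})|,
\]
which is the claimed formula. The only point requiring a small amount of care is the well-definedness and uniqueness of the smooth ancestor, but this is already asserted in Theorem \ref{T:ancestors} (and explained in the surrounding discussion via maximal decreasing subsequences), so there is no real obstacle; the corollary is a bookkeeping consequence of that theorem.
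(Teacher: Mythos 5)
Your proof is correct and matches the paper's intent exactly: the paper calls the corollary an ``immediate consequence'' of Theorem \ref{T:ancestors} and gives no separate argument, and your partition-by-unique-smooth-ancestor bookkeeping is precisely that immediate consequence spelled out.
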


To use this corollary throughout the paper, we will find the following results on the function $C(n,k)$ useful to have so we state them here.

\begin{lemma}\label{L:C}
The function $C(n,k)=\frac{k}{n} \binom{2n-k-1}{n-1}$ satisfies the following properties:
\begin{enumerate}[(a)]
\item $C(n,n)=1$.
\item $C(n,n-1) = n-1$.
\item $C(n,1) = C(n,2) = \frac{1}{n}\binom{2n-2}{n-1} = C_{n-1}$, the Catalan number.
\item $C(n,3) =C_{n-1}-C_{n-2}$ and $C(n,4)=C_{n-1}-2C_{n-2}$.
\item $\sum_{k=s}^n C(n,k) = C(n+1,s+1)$, and in particular $\sum_{k=1}^n C(n,k) = C_n$.
\item $\sum_{k=1}^n kC(n,k) = C_{n+1}-C_n$.
\item $\sum_{k=1}^n k^2C(n,k) = 2C_{n+2}-5C_{n+1}+C_n$
\item $\lim_{n \to \infty} \frac{C(n,k)}{C_n} = \frac{k}{2^{k+1}}$.
\end{enumerate}
\end{lemma}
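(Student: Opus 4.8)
The plan is to dispatch (a)--(d) by direct substitution, derive a single Pascal-type recurrence for $C(n,k)$ that immediately yields the telescoping sum (e), bootstrap (e) into (f) and (g) by interchanging orders of summation, and close (h) with an elementary asymptotic estimate. For (a) and (b) I would substitute $k=n$ and $k=n-1$ into $C(n,k)=\frac{k}{n}\binom{2n-k-1}{n-1}$ and simplify using $\binom{n-1}{n-1}=1$ and $\binom{n}{n-1}=n$. For (c), $C(n,1)=\frac1n\binom{2n-2}{n-1}=C_{n-1}$ is immediate, and $C(n,2)=\frac2n\binom{2n-3}{n-1}$ equals the same quantity after applying Pascal's rule $\binom{2n-2}{n-1}=\binom{2n-3}{n-1}+\binom{2n-3}{n-2}$ and the symmetry $\binom{2n-3}{n-2}=\binom{2n-3}{n-1}$. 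Part (d) is the same sort of bookkeeping: expand $\binom{2n-4}{n-1}$ and $\binom{2n-5}{n-1}$ with Pascal's rule and symmetry and match against $C_{n-1}-C_{n-2}$ and $C_{n-1}-2C_{n-2}$.

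The heart of the lemma is the identity underlying (e). I would first prove the recurrence
\[ C(n+1,s+1)-C(n+1,s+2)=C(n,s), \]
which follows from expanding $\binom{2n-s}{n}=\binom{2n-s-1}{n}+\binom{2n-s-1}{n-1}$, rewriting $\binom{2n-s-1}{n}=\frac{n-s}{n}\binom{2n-s-1}{n-1}$, and collecting the rational coefficients. Summing over $k$ from $s$ to $n$ telescopes the right-hand side, and since $C(n+1,n+2)=\frac{n+2}{n+1}\binom{n-1}{n}=0$ the boundary term drops, giving $\sum_{k=s}^n C(n,k)=C(n+1,s+1)$; taking $s=1$ and using (c) gives $\sum_{k=1}^n C(n,k)=C_n$.

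With (e) available, (f) and (g) follow by summation by parts. Writing $k=\sum_{j=1}^{k}1$ and swapping the order of summation gives $\sum_{k=1}^n k\,C(n,k)=\sum_{j=1}^n C(n+1,j+1)=C_{n+1}-C_n$, where the last step applies (e) again (for $n+1$) together with $C(n+1,1)=C_n$. For (g) I would write $k^2=2\binom{k}{2}+k$, handle the $\binom{k}{2}$ term by the same interchange-of-summation trick --- which reduces it to (e) and (f) evaluated at levels $n+1$ and $n+2$ --- and combine to get $2C_{n+2}-5C_{n+1}+C_n$. Finally, for (h) I would form
\[ \frac{C(n,k)}{C_n}=\frac{k(n+1)}{n}\cdot\frac{\binom{2n-k-1}{n-1}}{\binom{2n}{n}} \]
and expand the quotient of binomials as $\frac{n\cdot n(n-1)\cdots(n-k+1)}{(2n)(2n-1)\cdots(2n-k)}$, a ratio of polynomials in $n$ of degree $k+1$ with leading coefficients $1$ and $2^{k+1}$; letting $n\to\infty$ with $k$ fixed gives $\frac{k}{2^{k+1}}$, with no need for Stirling's formula.

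The main obstacle is pinning down the Pascal-type recurrence in (e) precisely --- in particular managing the rational prefactors $k/n$ so that the telescoping cancellation is exact and the boundary term vanishes. Once that is in place, (a)--(d) are one-line substitutions, (f) and (g) are routine double-sum manipulations, and (h) is a direct limit computation.
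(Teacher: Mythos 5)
Your proof is correct, but for the central parts (e)--(g) it takes a genuinely different route from the paper. The paper invokes the standard lattice-path interpretation of ballot numbers: $C(n,k)$ counts monotone lattice paths from $(0,0)$ to $(n-k,n-1)$ staying weakly above the diagonal, and identities (e)--(g) are proved by conditioning on the unique vertical edge from height $n-1$ to height $n$ that such a path crosses. You instead derive everything algebraically from the Pascal-type recurrence $C(n+1,s+1)-C(n+1,s+2)=C(n,s)$, which I have checked: expanding $\binom{2n-s}{n}=\binom{2n-s-1}{n}+\binom{2n-s-1}{n-1}$ and using $\binom{2n-s-1}{n}=\frac{n-s}{n}\binom{2n-s-1}{n-1}$ does collapse the coefficients to $\frac{s}{n}$, the boundary term $C(n+1,n+2)=\frac{n+2}{n+1}\binom{n-1}{n}$ vanishes, and the telescoping gives (e). Your reductions of (f) and (g) to (e) via interchange of summation also check out; in particular the $\binom{k}{2}$ trick yields $\sum_k\binom{k}{2}C(n,k)=C_{n+2}-3C_{n+1}+C_n$ and hence $2C_{n+2}-5C_{n+1}+C_n$ as claimed. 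What each approach buys: the paper's argument is shorter once the lattice-path model is accepted (it cites \cite{Stanley} for that), and it explains \emph{why} sums of ballot numbers are again ballot numbers; yours is entirely self-contained, requiring nothing beyond Pascal's rule, and as a bonus your recurrence gives (c) and (d) for free ($C(n,3)=C(n,2)-C(n-1,2)$, etc.), which you could have noted instead of redoing the binomial bookkeeping. Parts (a)--(d) and (h) are handled essentially as in the paper (direct substitution and a direct limit of a ratio of polynomials of degree $k+1$ with leading coefficients $1$ and $2^{k+1}$).
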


\begin{proof}
The proof of the first four parts of this lemma are immediate from the definition.

To prove the next three parts, we will use a standard interpretation of the ballot numbers and Catalan numbers based on lattice paths.  In particular, we note that $C(n,k)$ gives the number of paths from the point $(0,0)$ to the point $(n-k,n-1)$ that travel only along grid lines and avoid all points $(x,y)$ with $x>y$.  Additionally, we have $C_n$ as the number of such paths from $(0,0)$ to $(n,n)$.  See \cite{Stanley} for more details on this interpretation.

In particular, $C(n+1,s+1)$ will count the number of lattice paths to the point $(n-s,n)$.  Each such path will traverse exactly one edge from a point $(j,n-1)$ to $(j,n)$ where $0 \le j \le n-s$, and in particular the number of paths that cross each such edge for a fixed $j$ is exactly the number of lattice paths to the point $(j,n-1)$, which is given by $C(n,n-j)$.  This tells us that $\displaystyle \sum_{j=0}^{n-s}C(n,n-j)=C(n+1,s+1)$, and statement (e) follows from the a change of variables.

In a similar vein, we note that $C_{n+1}-C_n$ counts the number of paths from $(0,0)$ to $(n+1,n+1)$ which avoid the point $(n,n)$ and therefore must include the point $(n-1,n+1)$. Now, any such path must include exactly one edge from a point $(j,n-1)$ to $(j,n)$ where $0 \le j \le n-1$.  Fixing one such $j$ we note that the number of paths including that edge is equal to the product of the number of paths from $(0,0)$ to $(j,n-1)$ and the number of paths from $(j,n)$ to $(n-1,n+1)$.  There are $C(n,n-j)$ of the former and $n-j$ of the latter, so we have that $\sum_{j=0}^{n-1} (n-j)C(n,n-j) = C_{n+1}-C_n$.  Part (f) follows by setting $k=n-j$.

The proof of part (g) works in a similar manner, conditioning the paths to $(n+2,n+2)$ on the edge they contain from $(j,n-1)$ to $(j,n)$ in the same way as the previous paragraph in order to obtain the identity $\displaystyle \sum_{k=1}^n \frac{1}{2} (k^2+5k+4)C(n,k)=C_{n+2}$, from which the claim follows.

The proof of the final part is a straightforward exercise in computing limits, after noting that
\[\frac{C(n,k)}{C_n}=\frac{k(2n-k-1)!(n+1)!}{(n-k)!(2n)!}\].

\end{proof}

In order to understand the set $\SArith(\CP_{m,n})$, we will first show that a smooth arithmetical structure is uniquely determined by the choice of $a_1$ and $b_1$.  In order to do this, we define an auxiliary function $F(x_1,x_2)$.

\begin{definition}
Let $x_1 >0$ and $x_2 \ge 0$.  We define a sequence $\{x_i\}$ by letting $x_i$ be the least residue of $-x_{i-2} \pmod x_{i-1}$. We then define the function $F(x_1,x_2)$ to be the largest $i$ so that $x_i >0$.
\end{definition}

For example, if $x_1=8$ and $x_2=5$ we compute that $x_3$ is the least residue of $-8 \pmod 5$, which is $2$.  We then get that $x_4 = 1, x_5=0$.  In particular, $F(8,5)=4$.

\begin{lemma}
For any pair of relatively prime integers $(a_1,b_1)$ there is at most one smooth arithmetical structure on a graph of the form $\CP_{m,n}$.  Moreover, we have that $m=F(2b_1,a_1)-1$ and $n=F(2a_1,b_1)-1$.
\end{lemma}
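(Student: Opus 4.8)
The plan is to exploit the observation made in the paragraph following the definition of smoothness: in a smooth structure the condition $a_{i-1}\mid a_i+a_{i-2}$ together with $0<a_i<a_{i-1}$ forces $a_i$ to be the least residue of $-a_{i-2}\pmod{a_{i-1}}$. So once $a_1$ and $a_2$ are fixed, every subsequent $a_i$ is determined, and the sequence of $a$'s is exactly an initial segment of the sequence $\{x_i\}$ from the definition of $F$ with $(x_1,x_2)=(a_1,a_2)$. The only thing missing is that $a_2$ is \emph{not} a free parameter: the divisibility condition at the doubled edge says $a_1\mid a_2+2b_1$, and smoothness says $0<a_2<a_1$ (when $m\ge 2$), which pins $a_2$ down as the least residue of $-2b_1\pmod{a_1}$. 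Thus $(a_1,\dots,a_m)$ coincides with $(x_1,x_2,\dots)$ for $(x_1,x_2)=(2b_1,a_1)$ \emph{shifted by one index}: indeed if we set $y_1=2b_1$, $y_2=a_1$, then $y_3$ is the least residue of $-2b_1\pmod{a_1}$, which is precisely $a_2$, and inductively $y_{i+1}=a_i$. Hence $m$ equals the largest $i$ with $a_i>0$, which is $F(2b_1,a_1)-1$; symmetrically $n=F(2a_1,b_1)-1$. This simultaneously shows uniqueness (nothing was chosen beyond $a_1,b_1$) and gives the claimed formulas for $m$ and $n$.

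In more detail, the key steps are: (1) record the structural fact that in a smooth structure the $a$-side condition at internal vertices $a_i$ ($2\le i\le m-1$, say) reads $a_{i-1}\mid a_{i+1}+a_{i-1}$... wait, the correct recursion: the neighbor condition at $a_i$ is $a_i\mid a_{i-1}+a_{i+1}$, which combined with smoothness ($a_{i+1}<a_i$, and $a_{i-1}=qa_i-a_{i+1}$ forces $a_{i+1}\equiv -a_{i-1}\pmod{a_i}$) makes $a_{i+1}$ the least residue of $-a_{i-1}\pmod{a_i}$; (2) handle the boundary: the doubled-edge condition $a_1\mid a_2+2b_1$ plus $a_2<a_1$ makes $a_2$ the least residue of $-2b_1\pmod{a_1}$ — this is exactly the "$x_3$ from $x_1=2b_1,x_2=a_1$" step; (3) handle the far endpoint: the condition at $a_m$ is $a_m\mid a_{m-1}$, and since the recursion would produce $x_{m+2}=0$ (least residue of $-a_{m-1}\pmod{a_m}$ is $0$ exactly when $a_m\mid a_{m-1}$), the sequence $\{x_i\}$ with $(x_1,x_2)=(2b_1,a_1)$ terminates at index $i=m+1$, i.e. $F(2b_1,a_1)=m+1$; (4) observe that relative primality of the label set is equivalent to $\gcd(a_1,b_1)$ dividing everything, and in the smooth case reduces to $\gcd(a_1,b_1)=1$, which is why we hypothesize $(a_1,b_1)$ coprime; (5) run the mirror-image argument on the $b$-side to get $n=F(2a_1,b_1)-1$; (6) conclude "at most one" because every entry of the structure is now a forced function of $(a_1,b_1)$.

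The main obstacle — really the only delicate point — is getting the boundary bookkeeping exactly right: one must check that the doubled-edge relation genuinely plays the role of the "$x_{i-2}$" two steps back (so that the $F$-sequence starts $2b_1, a_1, a_2,\dots$ rather than $a_1,a_2,\dots$), and that the termination condition "$x_i>0$ for the last time at $i=F$" lines up with the endpoint divisibility $a_m\mid a_{m-1}$ rather than being off by one. A secondary subtlety is the genuinely small cases $m=1$ (where $a_1$ is both the near and far endpoint, the condition is just $a_1\mid 2b_1$, and one should check $F(2b_1,a_1)=2$, i.e. $x_1=2b_1>0$, $x_2=a_1>0$, $x_3=$ least residue of $-2b_1\bmod a_1=0$ — valid precisely when $a_1\mid 2b_1$) and $m=2$; these should be verified explicitly so the uniform statement $m=F(2b_1,a_1)-1$ holds with no exceptions. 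Everything else is a routine induction on the index $i$ showing $a_i=x_{i+1}$.
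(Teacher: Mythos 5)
Your proof is correct and follows essentially the same approach as the paper: every label is forced by $(a_1,b_1)$ via the least-residue recursion, and the sequence $2b_1, a_1, a_2, \ldots, a_m, 0$ matches the sequence defining $F(2b_1,a_1)$, giving $m=F(2b_1,a_1)-1$. You are in fact more explicit than the paper about the index shift at the doubled edge and the termination condition $a_m\mid a_{m-1}$, which is the right place to be careful.
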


\begin{proof}
Given the pair $(a_1,b_1)$ we define $b_2$ to be the least residue of $-2a_1 \pmod b_1$, so that $b_1|2a_1+b_2$.  For $i>2$ we define $b_i$ to be the least residue of $-b_{i-2}$ mod $b_{i-1}$.  We define $a_i$ in an analogous way for $i>1$.  As long as $b_i>0$ we therefore have that $b_i<b_{i-1}$, and the same is true for the $a_i$, so the nonzero entries of this sequence satisfy the divisibility requirements of a smooth arithmetical structure on $\CP_{F(2b_1,a_1)-1,F(2a_1,b_1)-1}$.  The fact that $a_1$ and $b_1$ are relatively prime implies that the $gcd$ of the entire set is also one.
\end{proof}

One approach to counting smooth structures on $\CP_{m,n}$ would therefore be to try to understand the number of pairs $(a_1,b_1)$ so that $F(2b_1,a_1)=m+1$ and $F(2a_1,b_1)=n+1$. Unfortunately, we can not write down a concise formula for $F$, let alone invert it, so this direct approach seems out of reach.  On the other hand, there are a number of properties of the function $F(x,y)$ that are discussed at length in \cite{ICERM} that will be useful.  We will omit proofs of the following two of these facts, although they are not difficult.

\begin{lemma}\label{L:props}
For any $x >0$ and $y \ge 0$ we have the following:
\begin{itemize}
\item $F(x+ky,y)=F(x,y)$
\item $F(x,kx+y) = k+F(x,y)$
\end{itemize}
\end{lemma}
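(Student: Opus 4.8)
The plan is to prove each identity first in the case $k=1$ and then iterate, the only substantive ingredient being a single congruence computation; I will also note along the way that $F$ is well defined, since $0\le x_i<x_{i-1}$ for all $i\ge 3$ forces the positive terms of the sequence to strictly decrease, so the sequence terminates.

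\textbf{The identity $F(x+ky,y)=F(x,y)$.} It suffices to show $F(x+y,y)=F(x,y)$, since then $F(x+ky,y)=F\bigl((x+(k-1)y)+y,y\bigr)=F(x+(k-1)y,y)=\cdots=F(x,y)$. If $y=0$ both sides equal $1$. If $y>0$, let $\{x_i\}$ be the sequence generated from $(x,y)$ and $\{x_i'\}$ the sequence generated from $(x+y,y)$. Then $x_2=x_2'=y$, and $x_3'$ is the least residue of $-(x+y)\pmod y$, which is the same as the least residue of $-x\pmod y$, i.e.\ $x_3'=x_3$. Since for $i\ge 3$ the term $x_i$ (resp.\ $x_i'$) depends only on $x_{i-1},x_{i-2}$ (resp.\ $x_{i-1}',x_{i-2}'$), an immediate induction gives $x_i=x_i'$ for all $i\ge 2$; as $x_1,x_1'>0$ as well, the two sequences have the same last positive index, which is exactly the assertion.

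\textbf{The identity $F(x,kx+y)=k+F(x,y)$.} Again it suffices to treat $k=1$, i.e.\ to show $F(x,x+y)=1+F(x,y)$, because then $F(x,kx+y)=F\bigl(x,x+((k-1)x+y)\bigr)=1+F(x,(k-1)x+y)=\cdots=k+F(x,y)$. Write $\{a_i\}$ for the sequence from $(x,y)$ and $\{b_i\}$ for the sequence from $(x,x+y)$. If $y=0$, then $b_2=x$ and $b_3=0$, so $F(x,x)=2=1+F(x,0)$. If $y>0$, the key observation is that $b_3$, the least residue of $-x\pmod{x+y}$, equals $y$: indeed $y\equiv -x\pmod{x+y}$ and $0\le y<x+y$. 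Hence $b_1=x=a_1$, $b_2=x+y$, $b_3=y=a_2$, and $b_4$ is the least residue of $-(x+y)\pmod y$, namely the least residue of $-x\pmod y$, so $b_4=a_3$. An easy induction then shows $b_i=a_{i-1}$ for all $i\ge 3$, since $b_i=(-b_{i-2})\bmod b_{i-1}=(-a_{i-3})\bmod a_{i-2}=a_{i-1}$. Because $b_1,b_2>0$ and $b_i>0\iff a_{i-1}>0$ for $i\ge 3$, and because $F(x,y)\ge 2$ whenever $y>0$, the last positive index of $\{b_i\}$ equals $F(x,y)+1$, which is the claim.

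\textbf{Expected difficulty.} These arguments are routine: the entire content is the congruence $-x\equiv y\pmod{x+y}$, which lets the sequence started from $(x,x+y)$ reproduce, after one extra step, the sequence started from $(x,y)$. The only points needing mild care are the degenerate case $y=0$, the off-by-one bookkeeping in the induction relating $\{b_i\}$ to $\{a_i\}$, and checking (as above) that the defining sequence always terminates so that $F$ is finite.
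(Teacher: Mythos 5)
Your proof is correct and complete. The paper explicitly omits the proofs of these two identities (deferring to the discussion in the ICERM reference), so there is no argument in the text to compare against; your reduction to the $k=1$ case, the single congruence $-(x+y)\equiv -x \pmod{y}$ for the first identity and $-x\equiv y \pmod{x+y}$ for the second, and the index-shift induction relating the two sequences are exactly the "not difficult" details the authors had in mind. You also correctly isolate the only points needing care: the degenerate case $y=0$, the fact that $F(x,y)\ge 2$ when $y>0$ (so the shifted index really is the maximum), and the termination of the defining sequence.
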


One consequence of this lemma that will come in handy later is the following:

\begin{lemma}\label{L:4}
For any $x$, we have $F(4,x) = \frac{x+\epsilon_x}{4}$ where $\epsilon_x= \begin{cases}
4, & x \equiv 0 \pmod 4 \\
7, & x \equiv 1 \pmod 4 \\
6, & x \equiv 2 \pmod 4 \\
13, & x \equiv 3 \pmod 4 \\
\end{cases}$
\end{lemma}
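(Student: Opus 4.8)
The plan is to reduce $F(4,x)$ to $F(4,r)$ for the least residue $r \in \{0,1,2,3\}$ of $x$ modulo $4$, using the second identity of Lemma~\ref{L:props}, and then to verify each of the four base cases by direct computation. Writing $x = 4k + r$ with $0 \le r \le 3$ and applying $F(x',kx'+y) = k + F(x',y)$ with $x' = 4$ and $y = r$, we get $F(4,x) = k + F(4,r)$. Since $k = (x-r)/4$, the claimed formula $F(4,x) = (x+\epsilon_x)/4$ is equivalent to $F(4,r) = (r+\epsilon_r)/4$, i.e.\ to the four assertions $F(4,0) = 1$, $F(4,1) = 2$, $F(4,2) = 2$, $F(4,3) = 4$. (One should double-check the degenerate instances: when $r = 0$ the sequence starts $x_1 = 4$, $x_2 = 0$, so $F(4,0)$ is the largest $i$ with $x_i > 0$, namely $1$; the definition of $F$ permits $x_2 = 0$, so this is fine.)

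Next I would simply run the recursion $x_i \equiv -x_{i-2} \pmod{x_{i-1}}$, least residue, for each base case. For $x = 8+r$ it is perhaps cleaner to compute $F(4,5)$, $F(4,6)$, $F(4,7)$ directly as a sanity check, but the essential computations are: $(x_1,x_2) = (4,1) \rightsquigarrow x_3 = $ least residue of $-4 \bmod 1 = 0$, so $F(4,1) = 2$; $(4,2) \rightsquigarrow x_3 = $ least residue of $-4 \bmod 2 = 0$, so $F(4,2) = 2$; $(4,3) \rightsquigarrow x_3 = $ least residue of $-4 \bmod 3 = 2$, then $x_4 = $ least residue of $-3 \bmod 2 = 1$, then $x_5 = $ least residue of $-2 \bmod 1 = 0$, so $F(4,3) = 4$. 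Plugging $r = 0,1,2,3$ into $(r + \epsilon_r)/4$ with $\epsilon_r = 4,7,6,13$ gives $1, 2, 2, 4$, matching.

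The only real subtlety — and the step I expect to need the most care — is making sure Lemma~\ref{L:props} applies in the edge cases where some $x_i = 0$ or $x_i = 1$, since then the recursion either terminates or the modulus is $1$ (so the next residue is forced to be $0$). In particular the identity $F(x, kx+y) = k + F(x,y)$ is being invoked with $y$ possibly equal to $0$; one should confirm that Lemma~\ref{L:props} was stated for $y \ge 0$ (it was) and that $F(4,0)$ is interpreted consistently on both sides. Once that bookkeeping is pinned down, the result follows immediately by combining $F(4,x) = k + F(4,r)$ with the four tabulated values, so no further machinery is needed.
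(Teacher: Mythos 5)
Your proof is correct and follows essentially the same route as the paper: write $x=4k+r$, use the second identity of Lemma~\ref{L:props} to reduce to $F(4,x)=k+F(4,r)$, and verify the four base values $F(4,0)=1$, $F(4,1)=F(4,2)=2$, $F(4,3)=4$ by direct computation. Your extra care with the $r=0$ and modulus-$1$ edge cases is sound but not a departure from the paper's argument.
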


\begin{proof}
Write $x=4k+\epsilon$ where $0 \le \epsilon < 4$.  Then it follows from Lemma \ref{L:props} that we have $F(4,x) = k + F(4,\epsilon)$.  If $\epsilon=0$ we note that $F(4,\epsilon)=1$ so $F(4,x)=k+1=\frac{x+4}{4}$.  If $\epsilon=1$ (resp. $2$) then $F(4,\epsilon)=2$ so we have $F(4,x)=k+2$, which is equal to $\frac{x+7}{4}$ (resp. $\frac{x+6}{4}$).  Finally, if $\epsilon=3$ we have that $F(4,\epsilon)=4$ so $F(4,x)=\frac{x-3}{4}+4=\frac{x+13}{4}$.
\end{proof}

Note that in the introduction we defined an arithmetical structure to include the criterion $gcd(\{a_i,b_i\})=1$.  This is simply a way of specifying a single representative in the equivalence class of possible $\vr$-vectors.  The following theorem shows in an elementary manner that in such a representation we have that $a_m=1$ or $2$.  For this reason, each arithmetical structure also has a representative in which $a_m=2$ which can be obtained by scaling the entire structure by a factor of $2$ if needed.  We will refer to this as a $\r$-vector for the structure, and in subsequent sections it often turns out to be easier to count these.

\begin{theorem}\label{T:endpts}
If we have an arithmetical structure on $\CP_{m,n}$ represented by its $\vr$-vector then $a_m \in \{1,2\}$
\end{theorem}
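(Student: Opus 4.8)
The plan is to extract from the divisibility conditions the fact that $a_m \mid 2b_1$, and then to argue that any prime power dividing $a_m$ beyond a single factor of $2$ would have to divide every label, contradicting the coprimality built into the definition of an arithmetical structure. As noted in the introduction this conclusion also follows from \cite[Cor.\ 2.3]{Lor}, but the point of the theorem is to give a self-contained elementary proof, so I would avoid the critical group entirely.

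First I would show that $a_m$ divides $a_i$ for every $i$. For $m \ge 2$ this is a downward induction: the condition at the leaf $a_m$ gives $a_m \mid a_{m-1}$, and if $a_m$ already divides $a_i$ and $a_{i+1}$, then rewriting the interior condition $a_i \mid a_{i-1}+a_{i+1}$ as $a_{i-1} = k a_i - a_{i+1}$ for an integer $k$ shows $a_m \mid a_{i-1}$; for $m=1$ there is nothing to prove. Next I would deduce $a_m \mid 2b_1$: when $m \ge 2$ the condition at $a_1$ is $a_1 \mid a_2 + 2b_1$, and since $a_m$ divides both $a_1$ and $a_2$ it divides $2b_1$; when $m=1$ the condition at $a_1$ is literally $a_1 \mid 2b_1$.

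The heart of the argument is then a case split. If $a_m \nmid 2$ then either $a_m$ has an odd prime factor $p$, or $4 \mid a_m$. In the first case $p$ divides every $a_i$, and $p \mid 2b_1$ with $p$ odd forces $p \mid b_1$; then I would propagate down the $b$-branch exactly as on the $a$-side --- from $b_1 \mid b_2 + 2a_1$ with $p \mid b_1$ and $p \mid a_1$ get $p \mid b_2$, and from $b_j \mid b_{j-1}+b_{j+1}$ get $p \mid b_{j+1}$ inductively --- so $p$ divides all $m+n$ labels, which is impossible. In the second case $4 \mid a_m \mid 2b_1$ gives $2 \mid b_1$, and since $2 \mid 2a_1$ automatically the same propagation yields $2 \mid b_j$ for all $j$; together with $2 \mid a_m \mid a_i$ for all $i$ this again makes $2$ a common divisor of every label. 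Hence $a_m \mid 2$, i.e.\ $a_m \in \{1,2\}$.

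There is no genuinely hard step here; the only things to watch are the degenerate cases $m=1$ and $n=1$, so that the "$b$-branch propagation'' either starts from the right condition at $b_1$ or is vacuous, and being careful that it is precisely the factor of $2$ coming from the doubled edge between $a_1$ and $b_1$ that prevents the argument from also excluding $a_m = 2$ --- which is why the conclusion is $\{1,2\}$ rather than $\{1\}$.
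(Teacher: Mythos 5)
Your proof is correct and takes essentially the same route as the paper: both arguments establish $a_m \mid a_i$ for all $i$, deduce $a_m \mid 2b_1$ from the doubled-edge condition at $a_1$, and propagate divisibility through the $b$-branch to contradict the global coprimality of the labels. The only difference is in the last step --- the paper finishes by combining $a_m \mid 2b_n$ with $\gcd(a_m,b_n)=1$, whereas you run the same propagation prime-by-prime on a hypothetical factor of $a_m$ exceeding $2$ --- and both versions are valid, including your handling of the degenerate cases $m=1$ and $n=1$.
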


\begin{proof}
Note that we have that $a_m|a_{m-1}$.  Moreover, we have that $a_{m-1}|(a_m+a_{m-2})$, which implies in turn that $a_m|a_{m-2}$.  We similarly see that $a_m|a_i$ for all $i$.  The fact that $a_1|(a_2+2b_1)$ and $a_m$ divides both $a_1$ and $a_2$ now shows that $a_m|2b_1$, and similar to the above we see that $a_m|2b_i$ for all $i$. By symmetry, we also have that $b_n|2a_i$ for all $i$.  The fact that the set $\{a_i,b_i\}$ has greatest common divisor equal to one now implies that $gcd(a_m,b_n)=1$, but if $a_m|2b_n$ then we must have that $a_m=1$ or $2$.
\end{proof}

\section{Graphs of the form $\CP_{1,n}$}\label{S:P1n}

In this section, we wish to count the number of arithmetical structures on the graph $\CP_{1,n}$.  It follows from Corollary \ref{C:count} that our first step should be to count the number of smooth structures.

%

Given a smooth arithmetical structure, we will consider the $\r$-vector discussed in the previous section.  In particular, we will set $a_1=2$ and fix $n$ and ask how many choices of $b_1$ there are so that $F(2a_1,b_1)=n+1$  and $F(2b_1,a_1)=2$. The latter condition is immediate as $a_1|2b_1$, and it follows from Lemma \ref{L:4} that we have \[F(2a_1,b_1)=F(4,b_1)=\frac{b_1+\epsilon_{b_1}}{4}\] In particular, if $b_1 \equiv 1 \pmod 4$ then we want to set $n+1 = \frac{b_1+7}{4}$, so that $b_1=4n-3$.  Considering the other cases of $b_1 \pmod 4$ we see that for each $n$ we will get a structure on $\CP_{1,n}$ by choosing $b_1=4n$ or $4n-2$, and if $n \ge 3$ we will get an additional structure by setting $b_1=4n-9$.

In particular, we have proven the following:

\begin{theorem}\label{T:smoothP1n}
There are exactly four smooth arithmetical structures on $\CP_{1,n}$ for all $n \ge 3$.  There are $3$ smooth arithmetical structures on each of $\CP_{1,1}$ and $\CP_{1,2}$.
\end{theorem}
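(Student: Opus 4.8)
The plan is to use the characterization from the preceding lemma: a smooth arithmetical structure on a graph of the form $\CP_{1,n}$ corresponds to a relatively prime pair $(a_1,b_1)$ with $F(2b_1,a_1)=2$ (so that $m=1$) and $F(2a_1,b_1)=n+1$. As the discussion before the theorem explains, we fix the $\r$-vector normalization $a_1=2$; then $a_1\mid 2b_1$ forces $F(4,b_1)=F(2b_1,a_1)\ge$ something compatible with $m=1$, and in fact the condition $m=1$ is automatic here. So counting reduces to counting $b_1$ (relatively prime to $2$, i.e.\ odd --- though we must be careful, see below) with $F(4,b_1)=n+1$.

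The main computational step is to invert Lemma~\ref{L:4}. Writing $F(4,b_1)=\frac{b_1+\epsilon_{b_1}}{4}=n+1$ and running through the four residue classes of $b_1\bmod 4$, I would solve for $b_1$ in each case: $b_1\equiv 0$ gives $b_1=4n$; $b_1\equiv 2$ gives $b_1=4n-2$; $b_1\equiv 1$ gives $b_1=4n-3$; $b_1\equiv 3$ gives $b_1=4n-9$. One then checks the self-consistency of the residue class (e.g.\ $4n-3\equiv 1\pmod 4$ always holds, $4n-9\equiv 3\pmod 4$ always holds) and positivity: $4n,\,4n-2,\,4n-3$ are positive for all $n\ge 1$, while $4n-9>0$ requires $n\ge 3$. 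This yields four candidate values of $b_1$ when $n\ge 3$ and three when $n\in\{1,2\}$.

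The remaining point is the relative-primality / well-definedness check. Since $a_1=2$, we need $\gcd(a_1,b_1)$ to divide the overall gcd appropriately --- concretely, the structure obtained is genuine precisely when the resulting $\r$-vector has $\gcd$ of all entries equal to $1$ after possibly dividing by $2$; equivalently the $\vr$-vector is primitive. For $b_1=4n$ and $b_1=4n-2$, both even, one must verify that after forming the full sequence $\{a_i,b_i\}$ the gcd is still $1$ (i.e.\ that these are legitimately distinct smooth structures and not rescalings of each other or of the odd cases); I would note that $a_1=2$ together with $b_n=1$ (by Theorem~\ref{T:endpts}, one of $a_m,b_n$ equals $1$, and here $a_m=a_1=2$ forces $b_n=1$) guarantees the gcd condition immediately. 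So all four values give bona fide, pairwise distinct smooth structures. I expect this gcd/distinctness bookkeeping to be the only subtle point; the inversion of $F(4,\cdot)$ via Lemma~\ref{L:4} is routine. Finally, for $n=1$ and $n=2$ I would just exhibit the three structures explicitly (e.g.\ list the $(a_1,b_1)$ pairs $(2,4),(2,2),(2,1)$ for $n=1$) to confirm the count, completing the proof.
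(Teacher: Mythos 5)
Your proposal is correct and follows essentially the same route as the paper: fix the $\r$-vector normalization $a_1=2$, note $F(2b_1,a_1)=2$ is automatic, and invert $F(4,b_1)=n+1$ via Lemma~\ref{L:4} to get the candidates $4n$, $4n-2$, $4n-3$, and $4n-9$, the last requiring $n\ge 3$. The extra gcd/distinctness bookkeeping you flag is harmless but already built into the paper's $\r$-vector framework, so no separate verification is needed.
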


This allows us to prove the following count on the total number of arithmetical structures.

\begin{theorem}\label{T:P1n}
The number of arithmetical structures on $\CP_{1,n}$ is given by $4C_n-2C_{n-1}$
\end{theorem}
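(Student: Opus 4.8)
The plan is to apply Corollary~\ref{C:count} directly, using the count of smooth structures from Theorem~\ref{T:smoothP1n} together with the identities for $C(n,k)$ collected in Lemma~\ref{L:C}. Since $m=1$ here, the only relevant value of $m'$ is $m'=1$, and Lemma~\ref{L:C}(c) gives $C(1,1)=1$ (indeed $C(1,1)=C_0=1$ by the formula $C(n,1)=C_{n-1}$). So the double sum in Corollary~\ref{C:count} collapses to a single sum
\[
|\Arith(\CP_{1,n})| = \sum_{n'=1}^n C(n,n')\,|\SArith(\CP_{1,n'})|.
\]

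Next I would substitute the values of $|\SArith(\CP_{1,n'})|$ from Theorem~\ref{T:smoothP1n}: this equals $3$ for $n'=1,2$ and $4$ for $n'\ge 3$. Writing $|\SArith(\CP_{1,n'})| = 4 - \delta_{n'}$ where $\delta_{n'}=1$ for $n'\in\{1,2\}$ and $\delta_{n'}=0$ otherwise, the sum splits as
\[
|\Arith(\CP_{1,n})| = 4\sum_{n'=1}^n C(n,n') \;-\; C(n,1) \;-\; C(n,2).
\]
By Lemma~\ref{L:C}(e) the first sum is $\sum_{n'=1}^n C(n,n') = C_n$, and by Lemma~\ref{L:C}(c) each of $C(n,1)$ and $C(n,2)$ equals $C_{n-1}$. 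Therefore $|\Arith(\CP_{1,n})| = 4C_n - 2C_{n-1}$, as claimed. One should check the small cases $n=1,2$ separately only to confirm the formula still holds there (for $n=1$ the sum has a single term $C(1,1)\cdot 3 = 3$, and $4C_1 - 2C_0 = 4-2 = 2$ — so in fact one must be slightly careful about whether Theorem~\ref{T:smoothP1n} or the stated count governs $n=1$; more likely the intended reading is $n\ge 2$ or the boundary data is consistent and I would verify $n=2$: the sum is $C(2,1)\cdot 3 + C(2,2)\cdot 3 = 3+3=6$ while $4C_2 - 2C_1 = 8-2=6$, which matches).

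The main obstacle is essentially bookkeeping rather than anything deep: one must correctly identify which small values of $n'$ contribute the anomalous count $3$ instead of $4$, and then recognize that the correction term $C(n,1)+C(n,2)$ telescopes to $2C_{n-1}$ via part~(c) of Lemma~\ref{L:C}. The only genuine subtlety is the $n=1$ boundary case, where the general formula gives $2$ but there are $3$ smooth (hence $3$ total, since all structures on $\CP_{1,1}$ are already smooth) structures; I would note that the theorem is therefore to be understood for $n\ge 2$, or else flag $\CP_{1,1}$ as an explicit exception. Everything else is an immediate consequence of the two cited results.
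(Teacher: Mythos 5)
Your proof is correct and follows exactly the paper's own argument: specialize Corollary~\ref{C:count} to $m=1$, substitute the counts from Theorem~\ref{T:smoothP1n}, and simplify via Lemma~\ref{L:C}(c) and (e). Your observation that the formula fails at $n=1$ (where there are $3$ structures, not $4C_1-2C_0=2$, because the correction term $C(n,2)$ is spurious when the sum stops at $n'=1$) is a valid boundary caveat that the paper itself glosses over; the theorem should be read as holding for $n\ge 2$.
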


\begin{proof}
We have from Corollary \ref{C:count} that the number of arithmetical structures on $\CP_{1,n}$ is equal to
\begin{eqnarray*}
\sum_{n'=1}^n C(n,n')|\SArith(\CP_{1,n'})| &=& 3C(n,1)+3C(n,2)+\sum_{n'=3}^n 4C(n,n') \\
&=& \sum_{n'=1}^n 4C(n,n') - C(n,1)-C(n,2) \\
&=&4C_n-2C_{n-1}
\end{eqnarray*}
\end{proof}

It is well known that for large $n$ we have $C_n \approx \frac{4^n}{\sqrt{\pi}n^{3/2}}$, so in particular we see that for large $n$ we have that the number of arithmetical structures on $\CP_{1,n}$ is approximately $\frac{7}{2}C_n \approx \frac{7 \cdot 4^n}{2\sqrt{\pi}n^{3/2}}$.

\section{Graphs of the form $\CP_{2,n}$}\label{S:P2n}

We will begin by counting the number of smooth arithmetical structures on the graph $\CP_{2,n}$.  We will assume for simplicity that $n>1$, as we have considered the $n=1$ case in the previous section.  We will again count the number of possible $\r$-structures.  Recall in particular that this means that $a_2=2$ and $a_1=2a$ for some $a>1$.  Setting $b_1=b$, we are trying to find pairs $(a,b)$ so that $2a | 2b+2$ and $F(4a,b)=n+1$; any such pair satisfying these conditions will give a unique smooth structure. Our goal will be to compute the number of such pairs $(a,b)$ satisfying these conditions.

\begin{center}
\begin{tikzpicture}
  [scale=.4,auto=left,every node/.style={circle,fill=blue!20,minimum size=1cm}]
  \node (b) at (-3,0) {2};
  \node (c) at (0,0) {2a};
  \node (d) at (3,0) {b};
  \node (e) at (6,0) {};
  \node (f) at (9,0) {\ldots};
  \node (g) at (12,0) {};

\foreach \from/\to in {b/c, d/e,e/f,f/g}
    \draw (\from) -- (\to);

\draw (c) to[bend left=30] (d);
\draw (c) to[bend right=30] (d);

\end{tikzpicture}
\end{center}

The fact that $2a|2b+2$ implies that $b \equiv -1 \pmod a$, so we let $b=ka-1$.  We now note that we wish for $n+1=F(4a,ka-1)$.  The right hand side of this equation will simplify in different ways depending on the value of $k \pmod 4$.

\begin{itemize}
\item  If $k \equiv 1 \pmod 4$ then we have from Lemma \ref{L:props} that \[F(4a,b) = F(4a,ka-1)= \frac{k-1}{4} + F(4a,a-1) = \frac{k-1}{4}+ F(4,a-1).\]  We can use Lemma \ref{L:4} to further simplify to get $F(4a,b) =\frac{k-1}{4} + \frac{a-1+\epsilon_{a-1}}{4}$.  Knowing that this value should be equal to $n+1$ allows us to compute that $k=4n+6-a-\epsilon_{a-1}$, or in other words that $b=-a^2+(4n+6-\epsilon_{a-1})a-1$.  We want $b > 0$, so any choice of $a$ that makes this true will give us a smooth structure.  In particular for each $\epsilon \in \{0,1,2,3\}$ we wish to count the number of choices of $a$ that are congruent to $\epsilon \pmod 4$ that make $a^2-(4n+6-\epsilon_{a-1}) + 1 <0$. To see which values of $a$ make this true, we will make use of the following result:

\begin{lemma}\label{L:roots}
Let $\gamma>2$ be an integer.  Then the function $f(x)=x^2-\gamma x +1$ will be negative for all integers $x \in [1, \gamma-1]$ and positive for all other integers.
\end{lemma}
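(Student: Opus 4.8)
The plan is to analyze the quadratic $f(x) = x^2 - \gamma x + 1$ directly via its real roots and then restrict attention to integers. First I would compute the discriminant $\gamma^2 - 4$, which is positive since $\gamma > 2$, so $f$ has two distinct real roots $x_{\pm} = \frac{\gamma \pm \sqrt{\gamma^2-4}}{2}$. Since the leading coefficient is positive, $f(x) < 0$ precisely on the open interval $(x_-, x_+)$ and $f(x) \ge 0$ outside of it (with equality only at the irrational endpoints, which are never integers). So the statement reduces to showing that the integers lying strictly between $x_-$ and $x_+$ are exactly $1, 2, \ldots, \gamma-1$; equivalently, that $x_- < 1$ and $\gamma - 1 < x_+$, while also $x_- \geq 0$ and $x_+ \leq \gamma$ so that no integer outside $[1,\gamma-1]$ sneaks in (though for the sign statement it suffices to pin down which integers are interior).

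The cleanest route avoids the square root entirely: to show $1$ lies in the open interval it is enough to check $f(1) < 0$, and to show $\gamma-1$ lies in it, check $f(\gamma-1) < 0$. Indeed $f(1) = 1 - \gamma + 1 = 2 - \gamma < 0$ since $\gamma > 2$, and by the symmetry $f(\gamma - x) = f(x)$ we get $f(\gamma-1) = f(1) < 0$ as well. Because $f$ is negative at both $x=1$ and $x=\gamma-1$ and these lie in a single interval on which $f < 0$, every integer $x$ with $1 \le x \le \gamma - 1$ satisfies $f(x) < 0$. Conversely, for integers $x \le 0$ we have $x^2 \ge 0$, $-\gamma x \ge 0$, hence $f(x) \ge 1 > 0$; and for integers $x \ge \gamma$, the symmetry $f(x) = f(\gamma - x)$ together with $\gamma - x \le 0$ gives $f(x) > 0$ again. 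This covers all integers outside $[1, \gamma-1]$.

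I expect no real obstacle here — the only thing to be slightly careful about is the boundary cases $x = 1$ and $x = \gamma - 1$ when $\gamma$ is small (e.g. $\gamma = 3$, where the interval $[1, \gamma-1] = [1,2]$), but since the hypothesis is $\gamma > 2$ with $\gamma$ an integer, we have $\gamma \ge 3$ and the interval $[1, \gamma-1]$ is nonempty, so the argument goes through uniformly. One could alternatively just invoke $x_- x_+ = 1$ and $x_- + x_+ = \gamma$ to see $0 < x_- < 1 < \gamma - 1 < x_+ < \gamma$ directly, but the evaluation-at-endpoints argument is shorter and sidesteps estimating $\sqrt{\gamma^2-4}$.
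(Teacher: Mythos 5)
Your proof is correct and follows essentially the same route as the paper, which likewise locates the sign changes by evaluating $f$ at $0$, $1$, $\gamma-1$, and $\gamma$ (using $f(1)=f(\gamma-1)=2-\gamma<0$) and then appeals to the shape of the parabola. Your handling of $x\le 0$ and $x\ge\gamma$ via $f(x)\ge 1$ and the symmetry $f(\gamma-x)=f(x)$ is a slightly more self-contained phrasing of the same argument.
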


\begin{proof}
It is a simple algebra exercise to see that $f(x)<0$ for all values of $x$ in the range $\left( \frac{\gamma-\sqrt{\gamma^2-4}}{2},\frac{\gamma+\sqrt{\gamma^2-4}}{2}\right)$.  One can show using calculus that the left endpoint is in the range $(0,1)$ and the right endpoint is in the range $(\gamma-1,\gamma)$. In fact, one can see directly that $f(0)=f(\gamma)=1>0$ and $f(1)=f(\gamma-1) = 2-\gamma<0$. Thus, the two roots of $f(x)$ are in the ranges $(0,1)$ and $(\gamma-1,\gamma)$ so the result follows from properties of quadratic equations.
\end{proof}

In particular, if $\epsilon=0$ we are counting the number of values of $a$ that are congruent to $0 \pmod 4$ in the range $[1,4n+6-\epsilon_3-1] = [2,4n-8]$, so (because $n \ge 2$) we have that there are $n-2$ choices of $a$ that work.  Similarly, if $\epsilon=1$ we are counting the number of choices of $a \equiv 1$ mod $4$ in the range $[2,4n+1]$, of which there are $n$.  The conditions are also satisfied if $a \equiv 2 \pmod 4$ and $a \in [1,4n-2]$ or if $a \equiv 3 \pmod 4$ and $a \in [1,4n-1]$.  Summing these up gives us a total of $4n-2$ structures on $\CP_{2,n}$.

\item If $k \equiv 2 \pmod 4$, we can compute that \[F(4a,b)=F(4a,ka-1)=F(4a,2a-1)+\frac{k-2}{4} = F(2,2a-1)+\frac{k-2}{4} = a+1+\frac{k-2}{4}.\]  In particular, we have that $n=\frac{k-2}{4}+a$, from which we can conclude that $k=4n-4a+2$ so that $b=-(2a)^2+(2n+1)2a -1$.  It follows from Lemma \ref{L:roots} that we will have a structure for each $a>1$ so that $2a \in [1,2n]$.  In particular, there are $n-1$ smooth structures that can be formed in this way.

\item If $k \equiv 3 \pmod 4$ then we can compute that \[F(4a,b) = F(a+1,3a-1) + \frac{k-3}{4} = F(a+1,a-3)+\frac{k+5}{4} = F(4,a-3)+\frac{k+5}{4}\] This is equal to $\frac{a+k+2+\epsilon_{a-3}}{4}$.  We note that this computation only works if $a \ge 3$, but separate direct computations will show that if $a=2$ then we get a structure whenever $n \ge 3$. As before, setting $F(4a,b)=n+1$ allows us to compute that $b=-a^2+(4n+2-\epsilon_{a-3})-1$.  A computation similar to the above shows that this will be positive if $a \in [3,4n-11] \cup \{4n-9,4n-8,4n-7,4n-5\}$. Therefore, in this case we have $4n-8$ structures for each $n > 3$, $5$ structures for $n=3$, and no structures when $n=2$.

\item Finally, if $4|k$ then we compute that \[n=F(4a,b)-1=F(4a,ka-1)-1=\frac{k-4}{4}+F(4a,4a-1)-1 = \frac{k}{4}+4a-2.\]  Solving for $k$ implies that $k=4n-16a+8$ and in particular that $b=-16a^2+(4n+8)a-1$.  Lemma \ref{L:roots} implies that $b$ will therefore be negative if $4a \in [1, n+2)$, which will happen precisely if $1 < a < \frac{n+2}{4}$.  There are $\floor*{\frac{n-3}{4}}$ such choices of $a$.
\end{itemize}
Combining these four cases gives us the following theorem:

\begin{theorem}\label{T:smoothP2n}
If $n \ge 4$ then there are $9n-11 + \floor*{\frac{n-3}{4}}$ smooth arithmetical structures on the graph $\CP_{2,n}$.  Moreover, there are $17$ smooth structures on $\CP_{2,3}$, eight smooth structures on $\CP_{2,2}$ and three smooth structures on $\CP_{2,1}$.
\end{theorem}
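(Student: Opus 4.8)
The plan is to count smooth structures by enumerating their $\r$-vectors directly, following the parametrization set up just above. After passing to the $\r$-vector, a smooth structure on $\CP_{2,n}$ has $a_2 = 2$; since $a_2 \mid a_1$ and smoothness forces $a_1 > a_2$, we can write $a_1 = 2a$ with $a > 1$. Setting $b_1 = b$, the results of Section~\ref{S:background} reduce the problem to counting pairs $(a,b)$ with $a > 1$, $b > 0$, satisfying $2a \mid 2b + 2$ and $F(4a, b) = n + 1$, each of which determines a unique smooth structure. Since $2a \mid 2b + 2$ is equivalent to $a \mid b + 1$, I write $b = ka - 1$ with $k \ge 1$, and the task becomes: for how many pairs $(a, k)$ is $F(4a, ka - 1) = n + 1$ with $ka - 1 > 0$?

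The computation then splits into four cases according to $k \bmod 4$, and in each case the approach is identical. First I use the identity $F(x, \ell x + y) = \ell + F(x,y)$ of Lemma~\ref{L:props} to strip multiples of the first argument off the second, reducing $F(4a, ka - 1)$ either to $F(4, a - c)$ for a small constant $c$ — which Lemma~\ref{L:4} evaluates explicitly as $(a - c + \epsilon)/4$ — or to $F(2, 2a - 1)$, whose value is immediate from the same lemma. Setting the resulting expression equal to $n + 1$ and solving gives $k$, and hence $b$, as an explicit function of $a$, $n$, and (in two of the cases) the mod-$4$ correction $\epsilon$. The only remaining constraint, $b > 0$, then becomes a quadratic inequality in $a$ of the shape $a^2 - \gamma a + 1 < 0$ (or $16a^2 - \gamma a + 1 < 0$ when $4 \mid k$), and Lemma~\ref{L:roots} pins down exactly the interval of admissible $a$. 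Counting the integers in that interval in each relevant residue class modulo $4$ yields the per-case totals $4n - 2$, $n - 1$, $4n - 8$, and $\floor*{\frac{n-3}{4}}$, which sum to $9n - 11 + \floor*{\frac{n-3}{4}}$ for $n \ge 4$.

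The residual work is pure boundary bookkeeping, and this is where I expect the only real friction. The reductions through Lemma~\ref{L:4} require $a$ to be large enough for the shifted argument to be nonnegative — for instance the $k \equiv 3$ reduction needs $a \ge 3$ — so the finitely many small exceptional values, such as $a = 2$, must be treated by direct computation and folded into the counts. Likewise, for $n \in \{1, 2, 3\}$ several of the admissible intervals from Lemma~\ref{L:roots} are empty or truncated, so $\CP_{2,1}$, $\CP_{2,2}$, and $\CP_{2,3}$ should be enumerated by hand, giving $3$, $8$, and $17$ smooth structures respectively. Throughout, care is needed to apply the corrections $\epsilon_{a-1}$, $\epsilon_{a-3}$, and so on to the correct argument and to treat the endpoints of each interval with the right strict or non-strict inequality, so that the four cases combine with no overlap and nothing dropped.
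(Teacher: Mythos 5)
Your proposal is correct and follows essentially the same route as the paper: the same $\r$-vector parametrization $a_2=2$, $a_1=2a$, $b=ka-1$, the same four-way split on $k \bmod 4$ using Lemmas \ref{L:props}, \ref{L:4}, and \ref{L:roots}, and the same per-case totals $4n-2$, $n-1$, $4n-8$, and $\floor*{\frac{n-3}{4}}$. The only quibble is that $F(2,2a-1)=a+1$ comes from Lemma \ref{L:props} rather than Lemma \ref{L:4}, which is stated only for $F(4,\cdot)$.
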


More specifically, when $n \ge 4$ one notes that one gets four arithmetical structures for each value of $a \in \left[2,\floor*{\frac{n-3}{4}}\right]$, three structures for each $a \in \left[\floor*{\frac{n-3}{4}}+1,n\right]$, two structures for each $a \in [n+1,4n-9] \cup [4n-11,4n-7] \cup {4n-5}$, and one structure for each $a \in \{4n-10,4n-6,4n-3,4n-2,4n-1,4n+1\}$.  In particular, the largest value of $a_1$ in any smooth structure is $8n+2$.  It is interesting to note that the biggest choice of $b_1$ on any smooth arithmetical structure on $\CP_{2,n}$ occurs when $a_1=4n+2$ and $b_1 = 4n^2+4n$.

We can now use the results of Theorem \ref{T:smoothP2n} to count the total number of arithmetical structures on $\CP_{2,n}$:

\begin{theorem}\label{T:P2n}
The number of arithmetical structures on $\CP_{2,n}$ is given by:
\[|Arith(\CP_{2,n})| = 9C_{n+1}-16C_n+5C_{n-1}-C_{n-2}+ \sum_{j=2}^{\floor{\frac{n+1}{4}}} C(n+1,4j)\]

In particular, for large $n$ we compute that

\[\lim_{n \to \infty} \frac{|\Arith(\CP_{2,n})|}{C_{n+1}} =\frac{76523}{14400} \approx 5.3141\]
\end{theorem}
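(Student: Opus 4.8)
The plan is to specialize Corollary~\ref{C:count} to $m=2$ and feed in the smooth counts from Theorems~\ref{T:smoothP1n} and~\ref{T:smoothP2n}. Since $C(2,1)=C(2,2)=C_1=1$ by parts (a) and (c) of Lemma~\ref{L:C}, the corollary collapses to
\[
|\Arith(\CP_{2,n})| = \sum_{n'=1}^{n} C(n,n')\bigl(|\SArith(\CP_{1,n'})|+|\SArith(\CP_{2,n'})|\bigr).
\]
I would write $g(n')=9n'-7+\floor{\frac{n'-3}{4}}$, which by those two theorems is exactly $|\SArith(\CP_{1,n'})|+|\SArith(\CP_{2,n'})|$ for every $n'\ge 4$; checking the small cases shows that for $n'=1,2,3$ the true value exceeds $g(n')$ by $5$, $1$, $1$ respectively. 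Hence
\[
|\Arith(\CP_{2,n})| = \sum_{n'=1}^{n} C(n,n')\,g(n') + 5C(n,1)+C(n,2)+C(n,3),
\]
and the last three terms collapse to $7C_{n-1}-C_{n-2}$ via Lemma~\ref{L:C}(c),(d).

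It remains to evaluate $\sum_{n'=1}^{n}C(n,n')g(n')$. The linear and constant pieces are immediate from Lemma~\ref{L:C}(e),(f): $\sum n'C(n,n')=C_{n+1}-C_n$ and $\sum C(n,n')=C_n$, contributing $9(C_{n+1}-C_n)-7C_n=9C_{n+1}-16C_n$. For the remaining sum $\sum_{n'=1}^{n}C(n,n')\floor{\frac{n'-3}{4}}$, the key observation is that $\floor{\frac{n'-3}{4}}=\#\{\,j\ge 1:n'\ge 4j+3\,\}$ for every $n'\ge 3$, whereas for $n'=1,2$ this count is $0$ but the floor is $-1$. Substituting the indicator sum, swapping the (finite) order of summation, and telescoping through Lemma~\ref{L:C}(e) in the form $\sum_{n'=s}^{n}C(n,n')=C(n+1,s+1)$ gives
\[
\sum_{n'=1}^{n}C(n,n')\floor{\tfrac{n'-3}{4}}=\sum_{j\ge 1}C(n+1,4j+4)-C(n,1)-C(n,2)=\sum_{j=2}^{\floor{\frac{n+1}{4}}}C(n+1,4j)-2C_{n-1},
\]
the truncation being legitimate since $C(n+1,4j)=0$ once $4j>n+1$. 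Assembling all the pieces reproduces the claimed closed form; I would remark that the computation is valid once $n$ is large enough for these intermediate expressions to be meaningful, the handful of small $n$ being checked directly from Corollary~\ref{C:count}.

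For the asymptotic, divide the closed form by $C_{n+1}$ and let $n\to\infty$. Since $C_{n+1}/C_n\to 4$, the rational part tends to $9-\tfrac{16}{4}+\tfrac{5}{16}-\tfrac{1}{64}=\tfrac{339}{64}$. For the sum term, Lemma~\ref{L:C}(h) gives the termwise limit $C(n+1,4j)/C_{n+1}\to \frac{4j}{2^{4j+1}}$, and to move the limit inside I would first establish the $n$-uniform bound $C(n,k)/C_n\le k/2^{k-1}$, which follows by elementary manipulation of the factorial formula for $C(n,k)/C_n$ recorded in the proof of Lemma~\ref{L:C} (write it as $k\prod_{i=0}^{k}\frac{n+1-i}{2n-i}$; each factor is at most $1$ and all but the first two are at most $\tfrac12$). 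Then $\sum_{j\ge 2}\frac{4j}{2^{4j-1}}$ dominates, Tannery's theorem applies, and the limit of the sum is $\sum_{j\ge 2}\frac{4j}{2^{4j+1}}=2\sum_{j\ge 2}\frac{j}{16^j}=\tfrac{31}{1800}$, evaluated from $\sum_{j\ge 1}jx^j=x/(1-x)^2$ at $x=\tfrac{1}{16}$. Adding, $\tfrac{339}{64}+\tfrac{31}{1800}=\tfrac{76523}{14400}\approx 5.3141$.

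I expect the main obstacle to be the bookkeeping around the floor term: correctly isolating the $n'\le 3$ contributions both inside $g$ and inside the indicator-sum rewriting of $\floor{\frac{n'-3}{4}}$, and then pinning down the uniform estimate that licenses passing the limit inside the infinite series. Everything else is a mechanical application of Lemma~\ref{L:C}.
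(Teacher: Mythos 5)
Your proposal is correct and follows essentially the same route as the paper: specialize Corollary~\ref{C:count} to $m=2$ (using $C(2,1)=C(2,2)=1$), absorb the $n'\le 3$ discrepancies into ballot-number correction terms, convert the floor sum into $\sum_j C(n+1,4j)$ via Lemma~\ref{L:C}(e), and evaluate the limit with Lemma~\ref{L:C}(h); your bookkeeping ($5C(n,1)+C(n,2)+C(n,3)$ together with the $-C(n,1)-C(n,2)$ from the negative floor values) is equivalent to the paper's $4C(n,1)+C(n,3)$. Your uniform bound $C(n,k)/C_n\le k/2^{k-1}$ justifying the termwise limit is a welcome extra rigor the paper omits, and your final constant $\tfrac{76523}{14400}$ agrees with the theorem as stated.
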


\begin{proof}
We recall from the discussion leading to Corollary \ref{C:count} that any arithmetical structure on $\CP_{2,n}$ can be obtained by taking a smooth structure on a small graph and subdividing it appropriately.  In particular, we can use the results from Theorems \ref{T:smoothP1n} and \ref{T:smoothP2n} and the properties of the function $C(n,k)$ established in Lemma \ref{L:C} to compute:

%
%

\begin{eqnarray*}
|Arith(\CP_{2,n})| &=& \sum_{n'=1}^n \left(C(2,1) \cdot C(n,n') \cdot |\SArith(\CP_{1,n})| + C(2,2) \cdot C(n,n') \cdot |\SArith(\CP_{2,n})|\right) \\
&=&\sum_{n'=1}^n 2 C(n,n') \left(|\SArith(\CP_{1,n})| + \SArith(\CP_{1,n})|\right)\\
&=& 6C(n,1)+11C(n,2)+21C(n,3) + \sum_{n'=4}^n \left(9n'-7+\floor*{\frac{n'-3}{4}}\right)C(n,n')\\
&=& \sum_{n'=1}^n(9n'-7)C(n,n') +C(n,3)+4C(n,1) + \sum_{n'=7}^n \floor*{\frac{n'-3}{4}}C(n,n')\\
&=& 9\sum_{n'=1}^n n'C(n,n') -7  \sum_{n'=1}^n C(n,n')+C(n,3)+4C(n,1) + \sum_{j=2}^{\floor{\frac{n+1}{4}}} \sum_{k=4j-1}^n C(n,k)\\
&=& 9 (C_{n+1}-C_n) - 7 C_n +(C_{n-1}-C_{n-2})+4C_{n-1} + \sum_{j=2}^{\floor{\frac{n+1}{4}}} C(n+1,4j)\\
&=& 9C_{n+1}-16C_n+5C_{n-1}-C_{n-2}+ \sum_{j=2}^{\floor{\frac{n+1}{4}}} C(n+1,4j)\\
\end{eqnarray*}

This proves the formula in the first statement of the theorem.  For large $k$, we again note that $C_{k-1} \approx \frac{C_k}{4}$.  Moreover, it follows from Lemma \ref{L:C} that $C(n+1,4j) \approx \frac{4j}{2^{4j+1}}C_{n+1} \approx \frac{j}{2^{4j-3}}C_n$.  Therefore, the total number of arithmetical structures on $\CP_{2,n}$ approaches \[(9\cdot 4-16+\frac{5}{4}-\frac{1}{4^2}+\sum_{j=2}^\infty \frac{j}{2^{4j-3}})C_{n} =\frac{76523}{57600} C_{n}\]

\end{proof}
Table \ref{T:P2n} gives the number of smooth arithmetical structures as well as the overall number of arithmetical structures on $\CP_{2,n}$ for each $1 \le n \le 10$.

\begin{table}
\centering
$\begin{array}{|c|c|c|}
  \hline
  n & |\SArith(\CP_{2,n})|& |\Arith(\CP_{2,n})|  \\
  \hline
  1 & 3 & 6 \\
  2 & 8 & 17 \\
  3 & 17 & 55 \\
  4 & 25 & 177 \\
  5 & 34 & 581 \\
  6 & 43 & 1945 \\
  7 & 53 & 6625 \\
  8 & 62 & 22899 \\
  9 & 71 & 80137 \\
  10& 80 & 283426 \\
  11 &90 & 1011561\\
  12& 99 & 3638862\\
  13&108&13180428\\
  14&117&48031613\\
  15&127&175978875\\
  \hline
\end{array}$
\caption{The number of smooth structures and overall arithmetical structures on $\CP_{2,n}$ \label{T:P2n}}
\end{table}

\section{Graphs of the form $\CP_{3,n}$}\label{S:P3n}

As in previous sections, we begin by first counting the number of smooth structures on $\CP_{3,n}$.  Similar to the last section, we note that it is easier to relax the $gcd$ condition on the $\rr$-vector and instead consider the $\r$-vector for each structure, which is defined as having $a_3=2, a_2=2t,a_1=2a,b_1=b$.  Moreover, we note that $t|a+1$, so $a=\ell t -1$ for some $\ell \ge 2$ and $a|b+t$ so we have that $b=ka-t$ for some $k>1$.  In fact, we will get a smooth structure on $\CP_{3,n}$ precisely for an triple of integers $(t,k,\ell)$ so that $t,\ell \ge 2$, $k \ge 1$, and $n+1=F(4a,b)=F(4\ell t - 4,k \ell t - k - t)$.  In order to count the number of such triples, we will have to break into different cases based on the values of $t,k,l \pmod 4$.  We will explicitly work through a couple of these cases, and we summarize the full results in Table \ref{T:P3ncases}.

\begin{table}[!htbp]
\centering
$\begin{array}{|c|c|c|c|c|}
  \hline
  k \pmod 4 & \ell \pmod 4 & t \pmod 4 & \text{Equation} & \text{Number} \\
  \hline
  \hline
  1 & 1 & \text{all} & n+9 =\frac{k-1}{4} + \frac{\ell-5}{4} + 4(t-2) & \p'(n-7) \\
  \hline
  1 & 2 & 1 & n-1=\frac{k-1}{4}+ \frac{\ell-2}{4} + \frac{t-5}{4} & \p(n-1) \\
  \hline
  1 & 2 & 2 & n-1=\frac{k-1}{4}+ \frac{\ell-2}{4} + \frac{t-2}{4} & \p(n-1) \\
  \hline
  1 & 2 & 3 & n-1=\frac{k-1}{4}+ \frac{\ell-2}{4} + \frac{t-3}{4} & \p(n-1) \\
  \hline
  1 & 2 & 0 & n-3=\frac{k-1}{4}+ \frac{\ell-2}{4} + \frac{t-4}{4}& \p(n-3)\\
  \hline
  1 & 3 & \text{all} & n-2 = \frac{k-1}{4}+ \frac{\ell-3}{4} + (t-2) & \p(n-2) \\
  \hline
  1 & 0 & 1 & n-3=\frac{k-1}{4}+ \frac{\ell-4}{4} + \frac{t-5}{4} & \p(n-3) \\
  \hline
  1 & 0 & 2 & n-5 = \frac{k-1}{4}+ \frac{\ell-4}{4} + \frac{t-6}{4} & \p(n-5)\\
  &&&\text{ or } t=2, \, \frac{k-1}{4}+ \frac{\ell-4}{4} = n-3 &  \pp(n-3)\\
  \hline
  1 & 0 & 3 & n-2 =  \frac{k-1}{4}+ \frac{\ell-4}{4} + \frac{t-3}{4}  & \p(n-2) \\
  \hline
  1 & 0 & 0 & n-3 = \frac{k-1}{4}+ \frac{\ell-4}{4} + \frac{t-4}{4} & \p(n-3) \\
  \hline
  2 & \text{all} & 1 & n-4 = \frac{k-2}{4} + (\ell-2) + \frac{t-5}{4} & \p(n-4) \\
  \hline
  2 & \text{all} & 2 & n-1 = \frac{k-2}{4} + (\ell-2) + \frac{t-2}{4} & \p(n-1) \\
  \hline
  2 & \text{all} & 3 & n-2 = \frac{k-2}{4} + (\ell-2) + \frac{t-3}{4} & \p(n-2) \\
  \hline
  2 & \text{all} & 0 & n-2 = \frac{k-2}{4} + (\ell-2) + \frac{t-4}{4} & \p(n-2) \\
  \hline

 3 & 1 & \text{all} & n-4 = \frac{k-3}{4}+ \frac{\ell-5}{4} + (t-2) & \p(n-4) \\
 \hline
 3 & 2 & 1 & n-5 = \frac{k-3}{4} + \frac{\ell-6}{4} + \frac{t-5}{4} & \p(n-5) \\
 &&&\text{ or } \ell=2, \frac{k-3}{4} + \frac{t-5}{4} = n-3& \pp(n-3) \\
 \hline
 3 & 2 & 2 & n-7 = \frac{k-3}{4} + \frac{\ell-6}{4} + \frac{t-6}{4} & \p(n-7) \\
 &&& \text{ or } t=2, \, \frac{k-3}{4}+ \frac{\ell-6}{4} = n-5 & \pp(n-5) \\

& & &\text{ or } \ell=2, \, \frac{k-3}{4} + \frac{t-6}{4} = n-5 & \pp(n-5) \\
&&& \text{ or } k=3, \, \ell=2, \, \frac{k-3}{4} = n-3 & 1 \text{ if }n \ge 3 \\
 \hline
 3 & 2 & 3 & n-4 = \frac{k-3}{4} + \frac{\ell-6}{4} + \frac{t-3}{4} & \p(n-4) \\
 &&&\text{ or } \ell=2, \,\frac{k-3}{4} + \frac{t-3}{4} = n-2&  \pp(n-2) \\
 \hline
 3 & 2 & 0 & n-5 = \frac{k-3}{4} + \frac{\ell-6}{4} + \frac{t-4}{4} & \p(n-5) \\
 &&& \frac{k-3}{4} + \frac{t-4}{4} = n-3& \pp(n-3) \\
 \hline
 3 & 3 & \text{all} & n-9 = \frac{k-3}{4}+ \frac{\ell-7}{4} + 4(t-2) & \p'(n-9) \\
 &&& \text{ or } \ell=3, \, n-7 = \frac{k-3}{4} + 4(t-2) &  \pp'(n-7) \\
 \hline
3 & 0 & 1 & n-3 = \frac{k-3}{4}+ \frac{\ell-4}{4} + \frac{t-5}{4} & \p(n-3) \\
  \hline
  3 & 0 & 2 & n-3 = \frac{k-3}{4}+ \frac{\ell-4}{4} + \frac{t-2}{4} & \p(n-3) \\
  \hline
  3 & 0 & 3 & n-3 = \frac{k-3}{4}+ \frac{\ell-4}{4} + \frac{t-3}{4} & \p(n-3) \\
  \hline
   3 & 0 & 1 & n-5 = \frac{k-3}{4}+ \frac{\ell-4}{4} + \frac{t-4}{4} & \p(n-5) \\
 \hline
0 & \text{all} & 1 & n-7 = \frac{k-4}{4}+4(\ell-2) + \frac{t-5}{4} & \p'(n-7) \\
 \hline
0 & \text{all} & 2 & n-6 = \frac{k-4}{4}+4(\ell-2) + \frac{t-2}{4} & \p'(n-6) \\
 \hline
0 & \text{all} & 1 & n-8 = \frac{k-4}{4}+4(\ell-2) + \frac{t-3}{4} & \p'(n-8) \\
 \hline
0 & \text{all} & 1 & n-4 = \frac{k-4}{4}+4(\ell-2) + \frac{t-4}{4} & \p'(n-6) \\
\hline
\end{array}$
\caption{\text{The number of smooth structures on $\CP_{3,n}$ in various cases}\label{T:P3ncases}}
\end{table}

If $k \equiv \ell \equiv 1 \pmod 4$ then we use Lemma \ref{L:props} to compute that
\begin{eqnarray*}
n+1&=&F(4a,b)\\
n+1&=&F(4\ell t - 4,k \ell t - k - t)\\
n+1&=&\frac{k-1}{4} + F(4 \ell t - 4,\ell t -t - 1) \\
n+1&=&\frac{k-1}{4} + F(4t,(\ell-1)t-1) \\
n+1&=&\frac{k-1}{4}+\frac{\ell-5}{4} +F(4t,4t-1) \\
n+1&=&\frac{k-1}{4}+\frac{\ell-5}{4}+4t \\
n-7 &=&\frac{k-1}{4} + \frac{\ell-5}{4} + 4(t-2) \\
\end{eqnarray*}

In particular, we have shown that the smooth structures in this category are in bijection with ordered triples of nonnegative integers $(x,y,z)$ so that $x+y+z=n+9$ and $4|z$.

We next consider the case where $k \equiv 1 \pmod 4$ and $\ell \equiv 2 \pmod 4$.  As in the previous case, we can compute that
\[n+1=F(4a,b)=\frac{k-1}{4}+ F(4t,(\ell-1)t-1) = \frac{k-1}{4}+ \frac{\ell-2}{4} + F(4t,t-1) = \frac{k-1}{4}+ \frac{\ell-2}{4} + F(4,t-1)\]
Recalling Lemma \ref{L:4}, we now have that $n = \frac{k-1}{4}+ \frac{\ell-2}{4} + \frac{t-1+\epsilon_{t-1}}{4}-1$.  For example, if $t \equiv 1 \pmod 4$ we have that
\[n-1=\frac{k-1}{4}+ \frac{\ell-2}{4} + \frac{t-5}{4}\] which shows us that the smooth structures in this category are in bijection with ordered triples of nonnegative integers $(x,y,z)$ so that $x+y+z=n-1$.    More generally, we make the following notational definition:

\begin{definition}
For any integer $n$, we define $\p(n)$ to be the number of ordered triples of nonnegative integers $(x,y,z)$ so that $x+y+z=n$ and $\pp(n)$ to be the number of ordered pairs of nonnegative integers $(x,y)$ so that $x+y=n$.  We further define $\p'(n)$ (resp. $\pp'(n)$) to be the number of triples (resp. pairs) of nonnegative integers summing to $n$ with the further restriction that $4|x$.
\end{definition}

We note that it is well established in the literature (for example, \cite{Stanley}) that $\pp(n)=\binom{n+1}{1}$ and $\p(n) = \binom{n+2}{2}$.  It is straightforward to check that if $n \ge 0$ then $\pp'(n)=\floor*{\frac{n+4}{4}}$.  The function $\p'(n)$ is discussed in \cite[A130519]{OEIS}, and we will use the following lemma about it:

\begin{lemma}\label{L:recur}
For $n \ge 0$ we have that $\p'(n+4)=\p'(n)+n+5$.
\end{lemma}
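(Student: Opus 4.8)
The plan is to give a short bijective argument, splitting the triples counted by $\p'(n+4)$ according to the value of $x$. Recall that $\p'(m)$ counts ordered triples $(x,y,z)$ of nonnegative integers with $x+y+z=m$ and $4\mid x$; since $x\ge 0$ and $4\mid x$, the only possibilities are $x=0$ or $x\ge 4$, so this dichotomy is exhaustive and the two cases can be counted separately.

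For the triples with $x\ge 4$, I would observe that $(x,y,z)\mapsto(x-4,y,z)$ is a bijection from this set onto the set of triples of nonnegative integers summing to $n$ in which the first coordinate is divisible by $4$, with inverse $(x',y,z)\mapsto(x'+4,y,z)$; hence there are exactly $\p'(n)$ of them. For the triples with $x=0$, we are counting ordered pairs $(y,z)$ of nonnegative integers with $y+z=n+4$, and by the formula $\pp(m)=\binom{m+1}{1}$ recalled just before the lemma there are $\pp(n+4)=n+5$ of these. Adding the two contributions yields $\p'(n+4)=\p'(n)+(n+5)$, which is the claim.

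I do not expect any genuine obstacle: the only point requiring a word of care is that the case split is complete, which is immediate from $4\mid x$ and $x\ge 0$, and that the shift map really is a bijection, which follows because subtracting $4$ preserves both nonnegativity (given $x\ge 4$) and divisibility by $4$. If one prefers to avoid bijections, the same identity drops out of the closed form $\p'(m)=\sum_{j=0}^{\lfloor m/4\rfloor}\pp(m-4j)=\sum_{j=0}^{\lfloor m/4\rfloor}(m-4j+1)$ by peeling off the $j=0$ term of $\p'(n+4)$ and reindexing the remaining sum, but the bijective version is cleaner since it never mentions floor functions.
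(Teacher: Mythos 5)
Your proof is correct and is essentially the same as the paper's: both decompose the triples counted by $\p'(n+4)$ into those with $x=0$ (there are $n+5$) and those with $x\ge 4$, the latter being in bijection with the triples counted by $\p'(n)$ via shifting $x$ by $4$. The only difference is cosmetic — the paper describes the map as $x\mapsto x+4$ and you describe its inverse.
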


\begin{proof}
We note that if $(x,y,z)$ is a triple so that $x+y+z=n$ and $4|x$ then by setting $x'=x+4$ we will obtain a new triple $(x',y,z)$ whose entries sum to $n+4$ so that $4|x'$.  Moreover, all such triples are attained in this way except for the $n+5$ triples of the form $(0,y,n+4-y)$ where $0 \le y \le n+4$.  In particular, we see that $\p'(n+4)=\p'(n)+n+5$.
\end{proof}

\begin{theorem}\label{T:smoothP3n}
For all $n \ge 5$ we have that the number of smooth arithmetical structures on $\CP_{3,n}$ is given by:
\[|\SArith(\CP_{3,n})| =\begin{cases}
\frac{47}{4}n^2-36n+\frac{205}{4}, & n \equiv 1 \pmod 4 \\
\frac{47}{4}n^2-36 n+51, & n \equiv 2 \pmod 4 \\
\frac{47}{4}n^2-36 n+\frac{209}{4}, & n \equiv 3 \pmod 4\\
\frac{47}{4}n^2-36n+52, & n \equiv 0 \pmod 4 \\
\end{cases}\]

Moreover, $|\Arith(\CP_{3,1})| = 4$, $|\Arith(\CP_{3,2})| = 17$, $|\Arith(\CP_{3,n})| = 48$, and $|\Arith(\CP_{3,4})|=95$.

\end{theorem}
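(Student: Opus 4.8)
The proof is a matter of adding up the last column of Table~\ref{T:P3ncases}, once that table has itself been justified. I would organize the argument in three stages plus a verification of the small cases.

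\textbf{Stage 1: the table is correct.} Recall the characterization established above: smooth structures on $\CP_{3,n}$ biject with triples $(t,k,\ell)$ satisfying $t,\ell\ge 2$, $k\ge 1$, and $n+1=F(4\ell t-4,\,k\ell t-k-t)$. Each row of Table~\ref{T:P3ncases} is produced exactly as in the two cases worked out above: one repeatedly applies Lemma~\ref{L:props} to peel off a summand of the form $\tfrac{k-\varepsilon}{4}$ (or $\ell-2$ when $\ell\equiv 2\pmod 4$) according to the residues of $k$ and $\ell$, reducing the first argument of $F$ down to $4t$, and then — again by Lemma~\ref{L:props}, and finally Lemma~\ref{L:4} — down to an instance of $F(4,\cdot)$ or $F(2,\cdot)$, whose residue of $t\pmod 4$ contributes the last summand. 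In each case this expresses $n$ (suitably shifted) as a sum of three nonnegative integers, one of which inherits a divisibility-by-$4$ constraint precisely when the reduction passes through a base $F(4,\cdot)$ whose step from Lemma~\ref{L:props} was a multiple of $4t$ rather than $4$; this is the source of the $\p'$ entries as opposed to $\p$. Some reductions are valid only once $t$ (equivalently $\ell$) exceeds a small threshold, so the smallest member of that residue class — always $t=2$ or $\ell=2$ — must be treated by a separate, shorter reduction in which one of the three summands collapses; these are the ``or'' rows, giving $\pp$, $\pp'$, or constant corrections. One checks that each such boundary solution is distinct from the generic solutions counted in the same row, which is automatic since the boundary solution forces $t=2$ (resp.\ $\ell=2$) while the generic parametrization in that residue class omits that value.

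\textbf{Stage 2: evaluate and sum.} I would substitute the closed forms $\pp(N)=N+1$ and $\p(N)=\binom{N+2}{2}$, the formula $\pp'(N)=\floor*{\tfrac{N+4}{4}}$, and the quasipolynomial for $\p'$ obtained by iterating Lemma~\ref{L:recur},
\[
\p'(N)=\begin{cases}(2m+1)(m+1),&N=4m,\\ 2(m+1)^2,&N=4m+1,\\ (2m+3)(m+1),&N=4m+2,\\ 2(m+1)(m+2),&N=4m+3,\end{cases}
\]
into every row of the table and add. Since the table has $22$ rows of type $\p(\cdot)$, each contributing a leading term $\tfrac12 n^2$, and $6$ rows of type $\p'(\cdot)$, each contributing a leading term $\tfrac18 n^2$, the quadratic coefficient of the total is $22\cdot\tfrac12+6\cdot\tfrac18=\tfrac{47}{4}$, matching the degree-$2$ growth predicted by the conjecture in the introduction. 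The linear and constant contributions of the $\p'$ and $\pp'$ terms depend on the residues modulo $4$ of the shifted arguments $n-5,n-6,n-7,n-9,\dots$; sorting these out as $n$ runs through the four residue classes modulo $4$ produces exactly the four displayed formulas.

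\textbf{Stage 3: the small cases.} The hypothesis $n\ge 5$ is used precisely to guarantee that every argument fed to $\p,\p',\pp,\pp'$ in Stage~2 lies in the range where the substituted closed form is valid, so the computation there applies verbatim. For $n=1$ and $n=2$ one reads the value off from the symmetry $\CP_{3,n}\cong\CP_{n,3}$ together with Theorems~\ref{T:smoothP1n} and~\ref{T:smoothP2n}, giving $4$ and $17$; the values $48$ for $n=3$ and $95$ for $n=4$ follow either from a direct enumeration of triples $(t,k,\ell)$ or from summing Table~\ref{T:P3ncases} at those values of $n$ while correcting by hand the handful of boundary rows where the Stage~2 quasipolynomials are not yet in force (for $n=4$ this correction accounts for the discrepancy between $95$ and the value $96$ that the uncorrected quasipolynomial would give).

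\textbf{Main obstacle.} The entire difficulty is bookkeeping: verifying all (roughly thirty) rows of Table~\ref{T:P3ncases} by the reduction of Stage~1 and then summing them without slips. The two genuinely delicate points are (i) the $t=2$ and $\ell=2$ boundary cases — ensuring the ``or'' rows capture exactly the structures the generic parametrization misses, with no double counting, which is the disjointness check of Stage~1 — and (ii) carefully tracking the residues modulo $4$ of the various shifted arguments, since these alone determine which branch of $\p'$ and $\pp'$ is active and hence the residue-dependent constant terms in the final formula.
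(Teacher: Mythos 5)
Your proposal is correct and follows essentially the same route as the paper: justify each row of Table~\ref{T:P3ncases} by the reductions via Lemmas~\ref{L:props} and~\ref{L:4} (with separate treatment of the $t=2$ and $\ell=2$ boundary rows), sum the rows using the closed forms for $\pp$ and $\p$, handle the $\p'$ and $\pp'$ contributions as a quasipolynomial in $n \bmod 4$, and verify $n\le 4$ separately. The only cosmetic difference is that you substitute an explicit quasipolynomial for $\p'$ directly, whereas the paper aggregates those terms into a single function $h(n)$ and solves the recurrence $h(n+4)=h(n)+6n-12$ coming from Lemma~\ref{L:recur}; your leading-coefficient check $22\cdot\tfrac12+6\cdot\tfrac18=\tfrac{47}{4}$ and your identification of the $n=4$ discrepancy ($95$ versus $96$) confirm the bookkeeping is being done the same way.
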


\begin{proof}
The results of Table \ref{T:P3ncases} can be summarized by noting that for $n \ge 3$ the number of smooth structures on $\CP_{3,n}$ is given by
\begin{eqnarray*}
|\SArith( \CP_{3,n})| &=&  4\p(n-1)+4\p(n-2)+6\p(n-3)+3\p(n-4)+4\p(n-5)+ \p(n-7)\\
&& +2\p'(n-6)+2\p'(n-7)+\p'(n-8)+\p'(n-9)\\
&&+\pp(n-2)+3\pp(n-3)+2\pp(n-5)+\pp'(n-7)+1
\end{eqnarray*}

From this, we can compute the values of $|\Arith(\CP_{3,n})|$ directly for $n \le 4$.  As discussed above, if $n \ge 0$ we have that $\pp(n)=n+1$ and $\p(n) = \frac{(n+2)(n+1)}{2}$, which allows us to simplify the above formula to get that for all $n \ge 5$ we have that:

\[|\SArith( \CP_{3,n})| = 11n^2-30n+40+2\p'(n-6)+2\p'(n-7)+\p'(n-8)+\p'(n-9)+\pp'(n-7)\]

For notational convenience, we set $h(n)=2\p'(n-6)+2\p'(n-7)+\p'(n-8)+\p'(n-9)+\pp'(n-7)$.  It is straightforward to compute that we have $h(5)=0, h(6)=2, h(7)=7$ and $h(8)=12$.  Moreover, Lemma \ref{L:recur} gives us the following relationship:
\begin{eqnarray*}
h(n+4)&=&2\p'(n-2)+2\p'(n-3)+\p'(n-4)+\p'(n-4)+\pp'(n-3)\\
&=&2(\p'(n-6)+n-1)+2(\p'(n-7)+n-2)+(\p'(n-8)+n-3)+(\p'(n-9)+n-4)+(\pp'(n-7)+1)\\
&=&2\p'(n-6)+2\p'(n-7)+\p'(n-8)+\p'(n-9)+\pp'(n-7) + 6n-12\\
&=&h(n)+6n-12
\end{eqnarray*}

By recursion, we see that $h(n+4k)=h(n)+12k^2-24k+6nk$.  Combining these with the values computed above, we obtain that

\[h(n)=\begin{cases}
\frac{3n^2}{4} -6 n+ \frac{45}{4} & n \equiv 1 \pmod 4 \\
\frac{3 n^2}{4}-6 n+ 11 & n \equiv 2 \pmod 4\\
\frac{3 n^2}{4}-6 n+\frac{49}{4} & n \equiv 3 \pmod 4 \\
\frac{3n^2}{4} -6n + 12 & n \equiv 0 \pmod 4 \\
\end{cases}\]

The theorem follows.

\end{proof}

\begin{table}[!htbp]
\centering
$\begin{array}{|c|c|c|}
  \hline
  n & |\SArith(\CP_{3,n})|& |\Arith(\CP_{3,n})|  \\
  \hline
  1 &  4& 16 \\
  2 & 17& 55 \\
  3 &  48& 200 \\
  4 &  95&  698\\
  5 & 165&  2433\\
  6 & 258&  8529\\
  7 & 376&  30126\\
  8 & 516 &  107227\\
  9 & 679 & 384414\\
  10& 866 &  1387312\\
  11& 1078 & 5036958\\
  12& 1312 & 18388019\\
  13& 1569&67460437\\
  14& 1850&248605003\\
  15& 2156&919896078\\
  16& 2484&3416474991\\
  17& 2835& 12731777602\\
  18& 3210& 47593535704\\
  19& 3610& 178420933448\\
  20& 4032& 670633847016\\
  \hline
\end{array}$
\caption{The number of smooth structures and overall arithmetical structures on $\CP_{3,n}$ \label{T:P3n}}
\end{table}

We can now use Theorems \ref{T:smoothP3n} and Corollary \ref{C:count} to explicitly count the total number of arithmetical structures on $\CP_{3,n}$.    In order to do this, we will define auxiliary functions $\eta_i(k)$ which give the difference between the actual number of smooth arithmetical structures on $\CP_{i,k}$ and the number predicted by the polynomials in the formulae in Theorems \ref{T:smoothP1n}, \ref{T:smoothP2n}, and \ref{T:smoothP3n}.  Explicitly, we have the following:

\begin{center}
\begin{tabular}{|c||c|c|c|}
   \hline
   $k$ & $\eta_1(k)$ & $\eta_2(k)$ & $\eta_3(k)$ \\
   \hline
   \hline
   1 & -1& 6 & -23 \\
   \hline
   2 & -1 & 2 & -9 \\
   \hline
   3 & 0 & 1 & -2 \\
   \hline
   4 & 0 & 0 & -1 \\
   \hline
   >4 & 0 & 0 & 0 \\
   \hline
 \end{tabular}
\end{center}

We will also define the function $\gamma(k)$ as follows:

\[\gamma(k)=\begin{cases}
\frac{3}{4} & k \equiv 1 \pmod 4 \\
0 & k \equiv 2 \pmod 4 \\
\frac{11}{4} & k \equiv 3 \pmod 4 \\
2 & k \equiv 0 \pmod 4\\
\end{cases}\]

Then it follows from our earlier results that \[|\SArith(\CP_{3,k})| + 2|\SArith(\CP_{2,k})| + 2|\SArith(\CP_{1,k})| = \frac{47}{4}k^2 - \frac{35}{2} k + 34 + \gamma(k) +\eta_3(k)+2\eta_2(k)+2\eta_1(k)\]

In particular, this allows us to compute:

\begin{eqnarray*}
|\Arith(\CP_{3,n})| &=& \sum_{k=1}^n C(n,k)\left(|\SArith(\CP_{3,k})|+2|\SArith(\CP_{2,k})|+2|\SArith(\CP_{1,k})|\right) \\
&=&\sum_{k=1}^n C(n,k) \left(\frac{47}{4}k^2 - \frac{35}{2} k + 34 + \gamma(k) \right)+\sum_{k=1}^n \left(\eta_3(k)+2\eta_2(k)+2\eta_1(k)\right)C(n,k)\\
&=&\sum_{k=1}^n C(n,k) \left(\frac{47}{4}k^2 - \frac{35}{2} k + 34 + \gamma(k) \right)-C(n,4)-7C(n,2)-13C(n,1) \\
&=&\frac{47}{4}(2C_{n+2}-5C_{n+1}+C_n)-\frac{35}{2}(C_{n+1}-C_n)+ 34 C_n -C(n,4) -7C_{n-1}-13C_{n-1} \sum_{k=1}^n \gamma(k)C(n,k) \\
&=&\frac{47}{2}C_{n+2} - \frac{305}{4} C_{n+1} + \frac{253}{4}C_n -(C_{n-1}-2C_{n-2})-20C_{n-1} + \sum_{k=1}^n \gamma(k)C(n,k) \\
&=&\frac{47}{2}C_{n+2} - \frac{305}{4} C_{n+1} + \frac{253}{4}C_n -21C_{n-1}+2C_{n-2} + \frac{3}{4}\sum_{k\equiv 1}^n C(n,k)+ \frac{11}{4}\sum_{k\equiv 3}^n C(n,k) + 2 \sum_{k\equiv 0}^n C(n,k) \\
\end{eqnarray*}

This gives an explicit formula for the number of arithmetical structures on $\CP_{3,n}$.  For large values of $n$, we recall that Lemma \ref{L:C} tells us that $C_i \approx 4C_{i-1}$.  Moreover we have that $C(n,k) \approx \frac{k}{2^{k+1}}$, which can be used to show us that

\begin{alignat*}{2}
\sum_{k\equiv 1}^n C(n,k)&\approx&\sum_{j=0}^\infty \frac{4j+1}{2^{4j+2}}C_n&\approx \frac{76}{225}C_n\\
\sum_{k\equiv 3}^n C(n,k)&\approx&\sum_{j=0}^\infty \frac{4j+3}{2^{4j+4}}C_n&\approx \frac{49}{225}C_n\\
\sum_{k\equiv 0}^n C(n,k)&\approx&\sum_{j=0}^\infty \frac{4j+4}{2^{4j+5}}C_n&\approx \frac{32}{225}C_n\\
\end{alignat*}

In particular, one can see that for large values of $n$ we will have that $|\Arith(\CP_{3,n})| \approx \kappa C_n$, where
\[\kappa = \frac{47}{2} \cdot 4^2 - \frac{305}{4} \cdot 4 + \frac{253}{4}-21 \cdot \frac{1}{4} +2 \cdot \left(\frac{1}{4}\right)^2 + \frac{3}{4} \cdot \frac{76}{225} + \frac{11}{4} \cdot \frac{49}{225} + 2 \cdot \frac{32}{225} = \frac{78157}{600}=130.261\overline{6}\]

%
%
%
%

\bibliographystyle{amsplain}
\bibliography{arithbib}

\end{document}